\theoremstyle{plain}
\newtheorem*{theo}{Theorem}
\newtheorem{asse}{Assertion}
\newtheorem*{coro}{Corollary}
\theoremstyle{remark}
\newtheorem*{rema}{Remark}
\newtheorem*{defn}{Definition}
\def\U{{\frak U}}
\def\Z{{\mathbb Z}}
\def\C{{\mathbb C}}
\def\R{{\mathbb R}}
\def\al{\alpha}
\def\gcd{{\operatorname{gcd}}}
\def\id{{\operatorname{id}}}
\def\rank{{\operatorname{rank}}}
\def\db[#1\db]{%
 \setbox0=\hbox{$#1$}\argwidth=\wd0
 \setbox0=\hbox{$\left[\box0\right]$}
    \advance\argwidth by -\wd0
 \left[\kern.3\argwidth\box0 \kern.3\argwidth\right]}
\numberwithin{equation}{subsection}
\title
[Opposite power Series]
{Opposite Power  Series }% \\ to \\ tame power series}
\author
{
Kyoji Saito
}
\address{ IPMU, university of Tokyo}
 \thanks{}
\begin{document}

\maketitle
\begin{center}
\textit{ Dedicated to Professor Antonio Mach\`i\\ on the occasion of
 his 70th birthday.
}
\end{center}

 \begin{abstract}
In order to analyze the singularities of a power series function $P(t)$ on the
  boundary of its convergent disc, where $P(t)$ was mainly the
  growth function (Poincar\'e series) for a finitely generated group or
  a monoid \cite{S1}, we introduced the
  space $\Omega(P)$ of {\it opposite power series} in the opposite variable
  $s\!=\!1/t$.
 In the present 
  paper, forgetting about the geometric or combinatorial background on $P(t)$,
  we study the space $\Omega(P)$ abstractly for any suitably tame power series
  $P(t)\!\in\!\C\{t\}$. For the case when $\Omega(P)$ is a finite set and
  $P(t)$ is  meromorphic in a neighbourhood of the closure of its
  convergent disc, 
  we show {\it a duality between $\Omega(P)$ and the
  highest order poles of $P(t)$ on the boundary of its
  convergent disc.} 
 \end{abstract} 
\tableofcontents
\newpage

\section{Introduction}\label{sec:1}

There seems a remarkable ``resonance'' between oscillation
behavior\footnote{By an oscillation behavior, we mean that, for each
fixed $k\!\in\!\Z_\ge0$ called a period, the
sequence of the rate $\gamma_{n\!-\!k}\!/\!\gamma_n$ ($n\!\in\!\Z_{\!>\!\!>\!0}$)
has several different accumulation values.}
of a sequence $\{\gamma_n\}_{n\in\Z_{\ge0}}$ of complex numbers satisfying a
tame condition (see equation \eqref{eq:2.1.2}) and the singularities of
its generating function $P(t)\!=\!\sum_{n=0}^\infty \gamma_n t^n$ on the
boundary 
of the disc of convergence in $\C$.\ The idea was
inspired by and strongly used in the study of growth
functions (Poincar\'e series) for finitely
generated groups and monoids \cite[\S11]{S1}. 

% \smallskip
Let us explain the ``resonance'' by a typical example due to Mach\`i
\cite{M} (for details, see Examples in \S3.3 and \S5.4 of the
present paper. Other simple examples are given in \S3.4 (see \cite{C,S2,S3})
and \S3.5).
By choosing generators of order 2 and 3 in $\mathrm{PSL}(2,\Z)$, Mach\`i has
shown that the number $\gamma_n$ of elements of $\mathrm{PSL}(2,\Z)$ which are
expressed in words of length less or equal than $n\!\in\!\Z_{\ge\!0}$ w.r.t.\ the
generators is given by $\gamma_{2k}\!=\! 7\cdot \!2^k\!-\!6$ and 
$\gamma_{2k+1}\! =\! 10\cdot\!2^k\!-\!6$ for $k\!\in\!\Z_{\ge0}$.\
On one hand, this means that the sequence of ratios $\gamma_{n-1}/\gamma_n$
{\small ($n\!=\!1,2,\cdots$)}
accumulates to {\it two distinct ``oscillation'' values}  {\small \{$\frac{5}{7},\frac{7}{10}$\}} according as $n$ is even or odd. 
On the other hand, the generating function (or, so called, the growth function) 
 \vspace{-0.1cm}
can be expressed as a
rational function {\small $P(t)$}$=\!\!
\frac{(1+t)(1+2t)}{(1-2t^2)(1-t)}$, and it has {\it two poles} at
 \vspace{-0.1cm}
{\small \{$\pm\frac{1}{\sqrt{2}}$\}} on the boundary of its convergent disc
of radius\! {\small $\frac{1}{\sqrt{2}}$}.\ 
 \vspace{-0.1cm} 
{\it We see that  there is a ``resonance'' between % the two sets: 
the set {\small \{$\frac{5}{7},\frac{7}{10}$\}} of
 \vspace{-0.1cm}
``oscillations'' of the sequence $\{\gamma_n\}_{n\in\Z_{\ge0}}$
and the set {\small \{$\pm\frac{1}{\sqrt{2}}$\}} of ``poles'' of the function
$P(t)$,} in the way we shall explain in the present paper.
% and limit values of the
% sequence $\{\gamma_{n-1}/\gamma_n\}_{n\in \Z_{>0}}$ and the two poles of the function $P(t)$.

In order to analyze these phenomena, in \cite[\S11]{S1},
we introduced a 
space $\Omega(P)$ of {\it opposite power series} in the opposite variable
$s\!=\!1/t$, as a compact subset of $\C[[s]]$, where each opposite
series is defined by using 
``oscillations'' of the sequence $\{\gamma_n\}_{n\in\Z_{\ge0}}$ so that
$\Omega(P)$ carries a comprehensive information of oscillations (see
\S2.2 Definition \eqref{eq:2.2.2}).
%The space $\Omega(P)$ is a compact subset of the algebra $\C[[s]]$ with respect to the classical topology. 
 On the other hand,  the
space $\Omega(P)$ has duality with the singularities of the function
$P(t)$ (\S5 Theorem). Thus, $\Omega(P)$ becomes a bridge between the two
subjects: oscillations of $\{\gamma_n\}_{n\in\Z_{\ge0}}$ and singularities
of $P(t)$. 
%
% In fact, the space of opposite series has rich structure and seem to capture certain ``leading part of the growth'' of the series $\{\gamma_n\}_{n\in\Z_{\ge0}}$. In particular, if the singularities of the function $P(t)$ on the boundary of the disc are only finite number of poles, we investigate the space in details and formulate   a duality (or, better to say, a resonance) between the singularity of $P(t)$ and the space of opposite series (see \S5 Theorem).
%
% The concept of opposite series  was introduced in  \cite{S1} in the
% study of the Poincare series for discrete groups and was used to
% explain a duality (resonance between the singularity of the Poicare
% series and the space of pre-partition functions in the group theory. 
%
Since the method is independent of the group theoretic
background and is extendable to a wider class of series (see \S2.1 Example 2), which we call {\it
tame}, % over complex number field, 
we separate the 
results and proofs in a self-contained way in the present
paper.
% \footnote
% {The sequence of diagonal Ramsey numbers satisfies
% the tame condition. It is a hard but interesting question to determine
% $\Omega(R)$  (see \S2.1 Example 2 and \S2.4).} 
We study in details the case when $\Omega(P)$ is finite, where we have
good understanding of the above mentioned resonance by a use of {\it rational subset}
explained in the following paragraph, and Mach\`i's example is understood in that frame.  

% \smallskip
One key concept in the present paper is a {\it rational subset} $U$ (\S3), which is a subset of the positive integers $\Z_{\ge0}$ such
that 
the sum $\sum_{n\in U}t^n$ is a rational function in $t$ (i.e.\ $U$, up to finite, is a finite union of arithmetic progressions). The
concept is used twice in the present paper.
The first time it is used is in \S3, where we show that, if the space of opposite series $\Omega(P)$ is finite, then there is a finite partition
$\Z_{\ge0}=\amalg_{i}U_i$ of $\Z_{\ge0}$ into rational subsets so that
there is no longer oscillation inside in each $\{\gamma_n: n\in U_i\}$. We call such phenomena ``finite
rational accumulation'' (\S3.2 Theorem) (such phenomena already appeared when we were
studying the F-limit functions for monoids \cite[\S11.5 Lemma]{S1}). The
second time it is used is in \S5, where we introduce
a rational operator $T_U$ acting on a power series $P(t)\in\C[[t]]$ by
letting $T_{U}P(t)\!:=\!\sum_{n\in U} \gamma_nt^n$. The rational operators
form a machine that ``manipulates'' singularities of the power series $P(t)$. 
In this way, rational subsets combine the oscillation of
a sequence $\{\gamma_n\}_{n\in\Z_{\ge0}}$ and the singularities of the
generating function $P(t)\!:=\!\sum_{n=0}^\infty \gamma_nt^n$ for the case
when $\Omega(P)$ is finite.

%\vspace{0.2cm}
\medskip
The contents  of the present paper are as follows.

In \S2, we introduce the space $\Omega(P)$ of opposite series as 
the accumulating subset in $\C[[s]]$  of the sequence
% \vspace{-0.1cm}
$X_n(P)\! :=\!\sum_{k=0}^n \frac{\gamma_{n-k}}{\gamma_n}s^k\
(n\!=\!0,1,2,\cdots)$ with respect to the coefficient-wise convergence
topology, where the $k$th coefficient describes an oscillation of
period $k$. 
Dividing by period-one oscillation, %space $\Omega_1(P)\!:=\!$ the
			       %accumulation set of
			       %$\{\gamma_{n-1}/\gamma_{n}\}_{n\in\Z_{\ge1}}$, 
we construct a shift action $\tau_\Omega$ on 
the set $\Omega(P)$ to itself, which shifts $k$-period oscillations to
$k-1$-period oscillations. 
% In a sense, $(\Omega(P),\tau_\Omega)$ is a topological dynamical system.

In 3.1, we introduce the key concept: {\it finite rational accumulation}. 
We show that if $\Omega(P)$ is    
a finite set, then $\Omega(P)$ is automatically
a finite rational accumulation set and the $\tau_\Omega$-action becomes 
invertible and transitive. That is, $\tau_\Omega$ is acting cyclically
on $\Omega(P)$.

Starting with \S4, we assume always 
finite rational accumulation for $\Omega(P)$.
In \S4, we analyze in details of the opposite series in $\Omega(P)$ and
the module $\C\Omega(P)$ spanned by $\Omega(P)$,
%\vspace{-0.05cm}
showing that the opposite series become rational functions with the common denominator
%\vspace{-0.05cm}
$\Delta^{op}(s)$ in 4.1, and that the rank of $\C\Omega(P)$ is equal to
$\deg(\Delta^{op}(s))$ in \S4.4.

In \S5, we assume that the series $P(t)$ defines a meromorphic 
function in a neighbourhood of the closed convergent disc. 
Then we show that $\Delta^{op}(s)$ is opposite to the polynomial $\Delta^{top}(t)$ of the highest order
part of poles of $P(t)$ (Duality 
Theorem in \S5.3), and, in particular,  the rank of the 
space $\C\Omega(P)$ is equal to the number of poles 
of the highest order of $P(t)$ on the boundary of the convergent disc. 
We get an identification of some transition matrices obtained in  $s$-side and
in $t$-side, which plays a crucial role in the trace formula for limit
F-function \cite[11.5.6]{S1}.

\medskip
% \vspace{0.3cm}
\noindent
{\bf  Problems.}
The space $\Omega(P)$ is new with respect to the study of the
singularities of a power series function $P(t)$, and the author thinks the following
directions of further study may be rewarding. 

\smallskip
\noindent
1. % ould be worthwhile to notice that the set $\Omega(P)$ (in
% case when it is finite), capures only the highest or der part of the
% poles on the boundary or its converrgent disc (\S5 duality). Therefore,
%it seems also quite desirable to 
Generalize the space  $\Omega(P)$ in order to capture lower order poles
of 

$P(t)$ on the boundary of its convergent disc (c.f.\  \cite[\S12, {\bf 2.}]{S1}).\!

\noindent
2. Generalize the duality for the case when $\Omega(P)$
is infinite. Some 

probabilistic approach may be desirable (c.f.\  \cite[\S12, {\bf 1.}]{S1}).\!
% \vspace{-0.1cm}

%%%%%%%%%%%%%%%%%%%%%%%%%    Section 2    %%%%%%%%%%%%%%%%%%%%%%%%%%%

\section{ The space of opposite series.}

In this section, we introduce the space $\Omega(P)$ of opposite series
for a tame power series $P\in\C[[t]]$, and equip it with a
$\tau_\Omega$-action.

\subsection{Tame power series}\label{subsec:2.1}\hspace{0cm}

Let us call a complex coefficient power series in $t$
\begin{equation}
\label{eq:2.1.1}
\begin{array}{l}
P(t) \ =\ \sum_{n=0}^\infty \gamma_nt^n
\end{array}
\end{equation}
to be {\it tame}, if there are positive real numbers $u,v\in\R_{>0}$ such that 
\begin{equation}
\label{eq:2.1.2}
 u\ \le\ |\gamma_{n-1}/\gamma_n|\ \le\ v
\end{equation}
for sufficiently large integers $n$ (i.e.\ for $n\ge N_P$ for some $N_P\in\Z_{\ge0}$).
This implies that there are positive constants $c_1,c_2$ with $c_1\!\le\!c_2$ so that
\begin{equation}
\label{eq:2.1.3}
c_1v^{-n}\le |\gamma_n| \le c_2u^{-n}
\end{equation}
for sufficiently large integer $n\in\Z_{\ge0}$ (actually, put
$c_1\!=\!|\gamma_{N_P}|v^{N_P}$ and $c_2\!=\!|\gamma_{N_P}|u^{N_P}$ for
$n\ge N_P$).
Let us consider two limit values:
\begin{equation}
\label{eq:2.1.4}
 u\ \le\
 r_P:=\!1/\underset{n\to\infty}{\overline{\lim}}|\gamma_n|^{1/n}\ \le\ R_P:=\!1/\underset{n\to\infty}{\underline{\lim}}|\gamma_n|^{1/n}\ \le\ v.
\end{equation}
Cauchy-Hadamard Theorem says that $P$ is convergent of radius $r_P$. 
 
\medskip
\noindent
{\bf Example 1.} Let $\Gamma$ be a group or a monoid with a finite
generator system $G$.
Then the length $l(g)$ of an element $g\in \Gamma$ is the shortest length of
words expressing $g$ in the letter $G$. Set $\Gamma_n:=\{g\in\Gamma\mid 
l(g)\le n\}$ and $\gamma_n:=\#(\Gamma_n)$. Then 
the growth function (Poincar\'e series) for $\Gamma$ with respect to $G$ is defined by 
$P_{\Gamma,G}(t):=\sum_{n=0}^\infty \gamma_nt^n$.
The sequence  
$\{\gamma_n\}_{n\in\Z_{\ge0}}$ is increasing and semi-multiplicative
$\gamma_{m+n}\!\le\!\gamma_m\gamma_n$. Therefore, by choosing 
 $u\!=\!1/\gamma_1$ and $v\!=\!1$, the growth series is tame. 

{\bf 2.} Ramsey's theorem says that, for any $n\in\Z_{>0}$, there
exists a positive integer $N$ such that if the edges of the complete
graph on $N$ vertices are colored either red or blue, then there exists
$n$ vertices such that all edges joining them have the same colour. The
least such integer $N$ is denoted by $R(n)$, 
and is called the $n$th {\it diagonal Ramsey number},
e.g.\
$R(1)\!=\!1,R(2)\!=\!2,R(3)\!=\!6,R(4)\!=\!18$ (c.f.\ \cite{SR}). Then, the following
estimates are known due to Erd\"os \cite{E} and Szekeres:
\[
 2^{n/2}\le R(n)\le 2^{2n}.
\] 
So, $R(t):=\sum_{n=0}^\infty R(n)t^n$ (where put $R(0)\!=\!1$) form
a tame series.

\subsection{The space  $\Omega(P)$ of opposite series}
\label{2.2}\hspace{0cm}

Let $P$ be a tame power series. Then, there is a positive integer $N_P$
such that $\gamma_n$ is invertible for all $n\ge N_P$. Therefore, for
$n\in\Z_{\ge N_P}$, we define the {\it opposite polynomial of degree} $n$ by 
\begin{equation}
\label{eq:2.2.1}
\begin{array}{ll}
X_n(P)\ := \ \sum_{k=0}^n\frac{\gamma_{n-k}}{\gamma_n}\ s^k. 
\end{array}
\end{equation}
Regarding $\{X_n(P)\}_{n\ge N_P}$ as a sequence in the space $\C[[s]]$
of formal power series, where $\C[[s]]$ is equipped with the 
classical topology, i.e.\ the product topology of coefficient-wise
 convergence in classical topology, we define {\it the space of
 opposite series} by 
\begin{eqnarray}
\label{eq:2.2.2}
&\qquad \Omega(P)  := % 
\substack{
\text{\normalsize the set of accumulation points of 
the sequence} \\ 
\text{\normalsize \eqref{eq:2.2.1} with respect to the classical topology. }
}
% }.\!\!\!\!\!\!\!
\end{eqnarray} 
That is, an element of $\Omega(P)$ can be viewed as an equivalence class
of infinite convergent subsequences $\{X_{n_m}(P)\}_m$ of opposite polynomials.

\smallskip
The first statement on $\Omega(P)$ is the following.

\begin{asse} Let $P$ be a tame series. Then  
$\Omega(P)$ is a non-empty compact closed subset of $\C[[s]]$.
\end{asse}
\begin{proof}
For each $k\!\in\!\Z_{\ge0}$, the $k$th coefficient $\frac{\gamma_{n-k}}{\gamma_n}$ of the polynomial $X_n(P)$ for
 sufficiently large $n\!\in\!\Z_{\ge0}$ with respect to $P$ and $k$ (i.e.\ for $n\ge N_P+k-1$) has the
 approximation $u^k\le
 |\frac{\gamma_{n-k}}{\gamma_n}|\!=\!|\frac{\gamma_{n-1}}{\gamma_n}||\frac{\gamma_{n-2}}{\gamma_{n-1}}|\cdots|\frac{\gamma_{n-k}}{\gamma_{n-k+1}}|\le
 v^k$, i.e.\ it lies in the compact annulus
\[
 \bar{D}(0,u^k,v^k):=\{a\!\in\!\C\mid u^k\!\le\! |a|\!\le\! v^k\}.  
\]
Thus, for each fixed $m\!\in\!\Z_{\ge0}$, the image of the sequence
 \eqref{eq:2.2.1} under the truncation map
$\pi_{\le m}:\C[[s]]\to \C^{m+1},\ \sum_{k=0}^\infty a_ks^k\mapsto
 (a_0,\cdots,a_m)$ 
accumulates to an non-empty compact subset of
 $\prod_{k=0}^m\bar{D}(0,u^k,v^k)$, say $\Omega_{\le m}$.
Then, we have:
\[\begin{array}{ll}
 \Omega(P) %=\cap_{m=0}^\infty (\pi_{\le m})^{-1}\Omega_m
=\cap_{m=0}^\infty \big((\pi_{\le m})^{-1}\Omega_{\le m}\cap
 \prod_{k=0}^\infty\bar{D}(0,u^k,v^k)\big),
\end{array}
\]
where the RHS, as an intersection of decreasing sequence of compact sets, is
 non-empty and compact.
\end{proof}

An element $a(s)\!=\!\Sigma_{k=0}^\infty a_ks^k$ of $\Omega(P)$ 
is called an {\it opposite series}.
Its $k$th coefficients $a_k$, i.e.\ an {\it oscillation value of period} $k$,
belongs to $\bar{D}(0,u^k,v^k)$. Given an opposite series
$a(s)$, the constant
term $a_0$ is equal to 1. The coefficient $a_1$, i.e.\ oscillation value
of period 1, is  called the {\it initial} of the opposite series $a$, and denoted by $\iota(a)$.

For later use, let us introduce an auxiliary space of the initials:
\begin{equation}
\label{eq:2.2.3}
\Omega_1(P) := \text{the accumulation set of the sequence
$\Big\{\!\frac{\gamma_{n-1}}{\gamma_n}\!\Big\}_{\!n\gg 0}$}, 
\!\!\!\!\!\!
\end{equation}
which is a compact subset in $\bar{D}(0,u, v)$. 
The projection map 
$\Omega(P)\to\Omega_1(P),\ a\mapsto \iota(a)$ 
is surjective but may not be injective (see \S3.5 Ex.).

\subsection{The $\tau_\Omega$-action on $\Omega(P)$}\hspace{0cm}
\label{subsec:2.3}

We introduce a continuous map $\tau_\Omega$ form $\Omega(P)$ to itself.

\begin{asse} 
{\bf a.\ }Let $\{n_m\}_{m\in\Z_{\ge0}}$ be a subsequence of $\Z_{\ge0}$ tending to
 $\infty$. If the sequence $\{X_{n_m}(P)\}_{m\in\Z_{\ge0}}$ converges 
to an opposite series $a$, then the sequence $\{X_{n_m-1}(P)\}_{m\in\Z_{\ge0}}$
also converges to an opposite series, whose limit depends only on $a$
 and is denoted by $\tau_{\Omega}(a)$. 
Then, we have 
\begin{equation}
\label{eq:2.3.1}
 \tau_\Omega(a)\ =\ (a-1)/\iota(a) s.
\end{equation}

{\bf b.\ }Let $\C\Omega(P)$ 
be the $\C$-linear subspace of $\C[[s]]$ spanned by 
$\Omega(P)$. Then the map 
%\begin{equation}
% \label{eq:2.3.2}
$\tau:\Omega(P)\longrightarrow \C\Omega(P), 
\quad  a  \mapsto  \iota(a) \tau_\Omega(a) 
$
%\end{equation}
\noindent
naturally extends to an endomorphism of
$\C\Omega(P)$.
\begin{equation}
\label{2.3.3}
\tau\in End_\C(\C\Omega(P))
\end{equation}
\end{asse}

\begin{proof} a.\ By definition, for any $k\in\Z_{\ge0}$, the sequence $\frac{\gamma_{n_m\!-\!k}}{\gamma_{n_m}}$
converges to a constant $a_k\in \bar{D}(u^k,v^k)$. Then,
 $\frac{\gamma_{(n_m\!-\!1)-(k-1)}}{\gamma_{n_m\!-\!1}}=\frac{\gamma_{n_m\!-\!k}}{\gamma_{n_m}}/\frac{\gamma_{n_m\!-\!1}}{\gamma_{n_m}}$
 converges to $a_k/a_1$. That is, the sequence
 $\{X_{n_m-1}(P)\}_{m\in\Z_{\ge0}}$ converges to an opposite series,
 whose $(k\!-\!1)$th coefficient is equal to $a_k/a_1$.

b. This is trivial, since $ a \mapsto \iota(a) \tau_\Omega(a)$ is a
 restriction on $\Omega(P)$ of an affine linear endomorphism $(a-1)/s$ on $\C[[s]]$.
\end{proof}

\subsection{Examples of $\tau_\Omega$-actions} \hspace{0cm}

At present, except for the trivial cases when $\#\Omega(P)\!=\!1$ so that $\tau_\Omega\!=\!\id$, there are only few
examples where the action  $(\Omega(P_{\Gamma,G}),\tau_\Omega)$ is
explicitly known: namely, the groups of the form
$\Gamma\!=\!(\Z/p_1\Z)\!*\cdots*\! \Z/p_n\Z$ for some
$p_1,\cdots,p_n\!\in\!\Z_{>1}$  ($n\!\ge\!2$) with
the generator system $G\!=\!\{a_1,\cdots,a_n\}$ where $a_i$ is the standard
generator of $\Z/p_i\Z$ for $1\!\le\! i\!\le\! n$, which
include Mach\`i's example (see \S3.3-4). 

For the tame series $R(t)$ in \S2.1 Example 2, we know nothing
 about $(\Omega(R),\tau_\Omega)$. It is already a question whether
 $\#\Omega(R)$ is equal to $1$, finite many $(>\!1)$, or infinite? The author would like to
 expect  $\#\Omega(R)\!=\!1$.

\subsection{Stability of $\Omega(P)$}\hspace{0cm}

%The space  $\Omega\left(\frac{d^mP}{dt^m}\right)$   for derivations of $P$.
\label{subsec:2.4}

In the present subsection, we are (mainly) concerned with following type
of questions, which we will call {\it stability questions concerning
$\Omega(P)$}: 
for a given tame series $P$, under which assumptions on another power
series $Q$, is $P+Q$  again tame and $\Omega(P)\!=\!\Omega(P+Q)$? Or, if 
$\Omega(P+Q)$ changes from $\Omega(P)$, how does it change? 

We discuss some miscellaneous results related to stability questions, but we
do not pursue full generalities. Except that
Assertion 3 is used in the proof of Assertion 13, results in the present
paragraph are not used in the
present article. Therefore, the reader may choose to skip the part of
this subsection after Assertion 3 without substantial loss.

\begin{asse}
{\it Let $Q\!=\!\sum_{n=0}^\infty q_nt^n$ converge in the disc of radius $r_Q$ such that $r_Q>R_P$. Then $P+Q$ is tame and $\Omega(P)=\Omega(P+Q)$.}
\end{asse}

\noindent
{\it Proof.} Let $c$ be a real number satisfying
 $r_Q\!>c\!>\!R_P$.
 Then, one has $\underset{n\to\infty}{\lim}
 q_nc^n\!=\!0$ and $c^n\!\ge\! 1/|\gamma_n|$ for sufficiently large $n$.
 This implies $\underset{n\to\infty}{\lim}
 \frac{\gamma_n+q_n}{\gamma_n}\!=\!1\!+\!\underset{n\to\infty}{\lim}\frac{q_n}{\gamma_n}\!=\!1$. The
 required properties follow. \quad $\Box$ 

\smallskip
\begin{asse} {\it Let $r$ be a positive real number with $r\!<\!R_P$. If
 $\Omega_1(P)\cap\{z\!\in\!\C : |z|\!=\!r\}\!=\!\emptyset$. Then there exists a
 power series $Q(t)$ of radius of convergence $r_Q\!=\!r$ such that $P\!+\!Q$ is tame and $\Omega(P\!+\!Q)\not\subset \Omega(P)$.
}
\end{asse}
\begin{proof}  We define the coefficients of
 $Q(t)=\sum_{n=0}^\infty q_nt^n$ by the following conditions:
 $|q_n|=r^{-n}$ and $\arg(q_n)=\arg(\gamma_n)$. Then, for tameness of
 $P+Q$, we have to show some positive bounds $0\!<\!U\!\le\!A_n\!\le\!
 V$ for $A_n\!=\!|\frac{\gamma_{n-1}+q_{n-1}}{\gamma_n+q_n}|$.  Since $|\gamma_n\!+\!q_n|\!=\!|\gamma_n|\!+\!r^{-n}$, we have 
$A_n\!=\!\frac{|\gamma_{n-1}/\gamma_n|+r/(|\gamma_n|r^n)}{1+1/(|\gamma_n|r^n)}$. 
Then, evaluating term-by-term in the numerator, one gets
 $A_n\!\le\!v\!+\!r\!=:V$. On the other hand, according as $1\ge
 1/(|\gamma_n|r^n)$ or not, we have $A_n\ge u/2$ or $A_n\ge r/2$. 
Therefore, we may set $U\!:=\!\min\{u/2,r/2\}$.

Let us find a particular element $d\in \Omega(P+Q)$ such
 that $d\not\in \Omega(P)$. For a small positive real number
 $\varepsilon$ satisfying the inequality $(1\!-\!\varepsilon)/r\!
 >\!1/R_P$, there exists an
 increasing infinite sequence of integers $n_m$ ($m\!\in\!\Z_{\ge0}$) such
 that 
 $((1\!-\!\varepsilon)/r)^{n_m}\!>\!|\gamma_{n_m}|$ for $m\!\in\!\Z_{\ge0}$. 
By choosing a suitable sub-sequence (denoted by the same $n_m$), we may
 assume that 
 $X_{n_m}(P\!+\!Q)$ converges to an element, say $d$, in $\Omega(P+Q)$. Its $k$th
 coefficient $d_k$ is equal to the limit of the sequence 
$(\gamma_{n_m\!-\!k}\!+\!q_{n_m\!-\!k})/(\gamma_{n_m}\!+\!q_{n_m})$ for
 $n_m\!\to\!\infty$. For each fixed $n_m$, dividing
 the numerator and the denominator by $q_{n_m}$, we get an expression $(
 X\!+\!r^{k}Y)/(Z\!+\!1)$ where 
$|X|=|\gamma_{n_m-k}/\gamma_{n_m}|\cdot|\gamma_{n_m}r^{n_m}|\le
 v^k\cdot (1\!-\!\varepsilon)^{n_m}$ (for $n>>k$), $Y\in S^1$, and 
 $|Z|=|\gamma_{n_m}r^{n_m}|<(1\!-\!\varepsilon)^{n_m}$. Thus, taking
 the limit $n_m\to\infty$, we have $X\to 0$, $Y\to e^{i\theta_k}$ for
some $\theta_k\in\R$ and $Z\to0$ so that $d_k\!=\!r^{k}e^{i\theta_k}$. 
On the other hand, we see that $d\!\not \in\! \Omega(P)$, since
 $\iota(d)\!=\!r e^{i\theta_1}\not\!\in\!\Omega_1(P)$ by assumption.
\end{proof}

We do not use following Assertion in the present paper, since we know
more precise information for the cases
$\#\Omega(P)\!<\!\infty$. However, it may have a significance  when we
study the general case with $\#\Omega(P)\!=\!\infty$.

\begin{asse}\! An opposite series converges with radius
 $1/\sup\{|a|: a\!\in\!\Omega_1(P)\}\le 1/R_P$.
\end{asse}
 \begin{proof}

% \vspace{-0.1cm}
Let  $a(s)=\underset{m\to\infty}\lim X_{n_m}(P)$ for an increasing
  sequence $\{n_m\}_{m\in\Z_{\ge0}}$ be an opposite series. By the Cauchy-Hadmard theorem, the
  radius of convergence of $a$ is given by
% \vspace{-0.1cm}
{\small \[
 r_a = 1/\underset{k\to\infty}{\overline{\lim}}|a_k|^{1/k}   =
  1/\underset{k\to\infty}{\overline{\lim}}\ |\underset{m\to\infty}\lim\gamma_{n_m-k}/\gamma_{n_m}
   |^{1/k},
\]
where the RHS is lower bounded by $1/\sup\{|a| :  a\in\Omega_1(P)\}$ from below.
}
\end{proof}
\noindent
It seems natural to ask when we can replace $\sup\{|a| :  a\in\Omega_1(P)\}$ by $R_P$?
Finally, we state a result, which is not related to the stability.

\begin{asse}
For any positive integer $m$, we have the equality 
% \vspace{-0.1cm}
\begin{equation}
\label{eq:2.4.1}
% \vspace{-0.1cm}
\begin{array}{cc}
\Omega(P)\ =\ \Omega\left (\frac{d^mP}{dt^m}\right )
\end{array}
\end{equation}
which is equivariant with the action of $\tau_\Omega$
\end{asse}
\begin{proof} It is sufficient to show the case $m\!=\!1$. We show a slightly
stronger statement:
{\it the subsequence $\{ X_{n_m}(P)\}_{m\in\Z_{\ge0}}$ converges to  
a series $a(s)$ if and only if  
 $\{ X_{n_m}\left(\frac{dP}{dt}\right)\}_{m\in\Z_{\ge0}}$ also converges to $a(s)$.}

For an increasing sequence
 $\{n_m\}_{m\in\Z_{\ge0}}$ and  for any fixed $k\!\in\!\Z_{\ge0}$, the convergence of the sequence 
$\frac{\gamma_{n_m-k}}{\gamma_{n_m}}$ to $c$ is
 equivalent to the convergence of the sequence 
$\frac{(n_m-k)\gamma_{n_m-k}}{n_m\gamma_{n_m}}\!=\!{\tiny (1\!-\!k/n_m)}\frac{\gamma_{n_m-k}}{\gamma_{n_m}}$
to the same $c$.
\end{proof}

\section{Finite rational accumulation}

We show that, if $\Omega(P)$ is a finite set, then it has a strong
structure, which we call the {\it finite rational accumulation} (\S3.2
Theorem and its Corollary). The whole sequel of the present paper focuses
on its study.  
% We study the opposite series of $P(t)$ from the view point.

\subsection{Finite rational accumulation}\hspace{0cm}
\label{subsec:3.1}

 We introduce the concept of {\it finite rational accumulation}. To this end,
we start with a preliminary concept: a {\it rational subset} of
 $\Z_{\ge0}$. The following fact is easy and well known, so we omit
 its proof.

\smallskip
\noindent
{\bf Fact.}\
{\it The following conditions for a subset $U\!\subset\!\Z_{\ge0}$ are
equivalent.}

i) {\it Put $U(t):=\sum_{n\in U}t^n$. Then, $U(t)$ is a rational function in
$t$. }

ii) {\it There exists $h\in\Z_{>0}$ and a polynomial $V(t)$ such that
$U(t)=\frac{V(t)}{1-t^h}$.}

iii) {\it There exists $h\in\Z_{>0}$ such that $n+h\in U$ iff $n\in U$ for
$n>\!>0$.}

iv) {\it There exists $h\!\in\!\Z_{>\!0}$, a subset $u\!\subset\!\Z/h\Z$ and a finite set 
$D\!\subset\! \Z_{\ge0}$ 
such that $U\!\setminus\! D\! =\! \cup_{[e]\in u}U^{[e]}\!\setminus\! D$, 
where, for a class  $[e]\! \in\!\Z/h\Z$ of $e$, put}
% $[e]\! \in\!\Z/h\Z$ is the class of $e\!\in\!\Z$ and 
\begin{equation}
\label{eq:3.1.1}
 U^{[e]}\!:=\!\{n\!\in\!\Z_{\ge0}\mid n\! \equiv\! e \bmod h \}.
\end{equation}

Further more, ii), iii) and iv) are equivalent for a pair $(U,h)$. The least
such $h$ for a fixed $U$ will be called the {\it period} of $U$.

\begin{defn}
{\bf 1.} A subset $U$ of $\Z_{\ge0}$ is called a {\it rational subset} 
if it  satisfies one of the above four equivalent conditions.
%  We call the  expression $\cup_{[e]\!\in\! u}U^{[e]}$ in iv) the {\it standard expression} of $U$.

% the sum $U(t)\!:=\!\sum_{n\in U}\!t^n$ is the Taylor expansion at $0$ of a rational function in $t$. 
% Two rational subsets are {\it equivalent} if they coincide up to a finite number of elements.

{\bf 2.}\ A {\it finite rational partition} of $\Z_{\ge0}$
is a finite collection $\{U_a\}_{a\in \Omega}$ of rational 
subsets $U_a\!\subset\! \Z_{\ge0}$ 
indexed by a finite set $\Omega$ 
such that 
 there is a finite subset $D$ of $\Z_{\ge0}$ so that
one has the disjoint decomposition 
\[
  \Z_{\ge0}\setminus D=\amalg_{a\in \Omega}(U_a\setminus D). 
\]
In particular, for $h\in\Z_{>0}$, the partition $\mathcal{U}_h:=\{U^{[e]}\}_{[e]\in \Z/h\Z}$ of $\Z_{\ge0}$ is called the {\it standard partition of period} $h$.

{\bf 3.}\ For a finite rational partition $\{U_a\}_{a\in \Omega}$ of
 $\Z_{\ge0}$, the period of a standard partition, which subdivide
 $\{U_a\}_{a\in \Omega}$, is called a {\it period} of
$\{U_a\}_{a\in \Omega}$. The smallest period
 ($=\!\mathrm{lcm}\{\text{period of } U_a | \ a\in\Omega\}$)  of a finite rational partition 
$\{U_a\}_{a\in \Omega}$ is called  {\it the period} of $\{U_a\}_{a\in \Omega}$. 

\end{defn}

We, now, arrived at the key concept of the present paper.

\begin{defn}
A sequence $\{X_n\}_{n\in\Z_{\ge0}}$ of points in a Hausdorff space 
is {\it finite rationally accumulating} if the sequence
% set $\{X_n\mid n\in\Z_{\ge0}\}$
accumulates to a finite set, say $\Omega$, such that for a system
 of pairwise-disjoint open neighborhoods $\mathcal{V}_a$ 
for $a\!\in\!\Omega$,  
% with $\mathcal{V}_a\!\cap\!\mathcal{V}_b\!=\!\emptyset$ if $a\!\not=\!b$,
the system $\{U_a \}_{a\in\Omega}$ for 
$U_a\!:=\!\{n\!\in\!\Z_{\ge0}\mid X_n\!\in\!\mathcal{V}_a\}$ 
% ($a\in\Omega$)
is a finite rational partition of $\Z_{\ge0}$. 
% A period $h$ of the partition is called a {\it period of the sequence}.
% For an abuse of terminology, 
The (resp.\ a) period of the partition is called the (resp.\ a) {\it period of the finite rational accumulation set $\Omega$}.
 \end{defn}

\subsection{$\tau_\Omega$-periodic point in $\Omega(P)$}\hspace{0cm}

Generally speaking, finiteness of the accumulation set $\Omega$ of a sequence does
not imply that it is finite rationally accumulating (see \S3.5
Example a). Therefore, the following theorem describes a distinguished property of
the accumulation set $\Omega(P)$. This justifies the introduction of  the concept of 
% ``rational subset'' and 
``finite rational accumulation''. 
% They are also the starting point of the concept of {\it periodicity} which is the thorough bass of the whole study in sequel.

\begin{theo}
Let $P(t)$ be a tame power series in $t$. Suppose there exists an
 isolated point of $\Omega(P)$, say $a$, which is periodic with respect
 to the  $\tau_\Omega$-action on $\Omega(P)$. Then $\Omega(P)$ is  a finite rational 
accumulation set, whose period $h_P$ is equal to
 $\#\Omega(P)$. Furthermore, we have a natural bijection that identifies
 $\Omega(P)$ with the $\tau_\Omega$-orbit of $a$:
\begin{equation}
\label{eq:3.2.1} 
\begin{array}{rll}
\ \ \ \ \ \ \ \Z / h_P\Z\ \ & \simeq& \ \Omega(P) \\ 
 e \bmod h_P & \mapsto & a^{[e]}:=\underset{n\to\infty }{\lim}
  X_{e+h_P\cdot n}(P),  \!\!\!\!\!\!\!\!
\end{array}
\end{equation}
where the standard subdivision $\mathcal{U}_{h_P}$ of the partition of
 $\Z_{\ge0}$ is the exact partition for the space $\Omega(P)$ of the
 opposite series of $P$. The shift action $[e]\mapsto [e\!-\!1]$ in the LHS is
 equivariant to the $\tau_{\Omega}$ action in the RHS.  
\end{theo}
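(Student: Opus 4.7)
The plan is to exploit the isolation of $a$ in $\Omega(P)$. Write $a_i:=\tau_\Omega^i(a)$ with indices understood modulo $h$, so that $a_0=a_h=a$. Since $\Omega(P)$ is compact (Assertion 1) and $a$ is isolated, I can choose a neighbourhood $\mathcal{V}_a\subset\C[[s]]$ with $\overline{\mathcal{V}_a}\cap\Omega(P)=\{a\}$, and set $U_a:=\{n\ge N_P : X_n(P)\in \mathcal{V}_a\}$. Then $U_a$ is infinite because $a$ is an accumulation point, and by compactness every subsequence of $\{X_n(P)\}_{n\in U_a}$ accumulates only at $a$, so $X_n(P)\to a$ along $U_a$. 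Iterating Assertion 2a $h$ times then gives $X_{n-h}(P)\to\tau_\Omega^h(a)=a$, so there is a uniform threshold $N_1$ with the property: $n\in U_a$ and $n\ge N_1$ imply $n-h\in U_a$.

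Next I extract the rational structure of $U_a$. Let $S\subset\Z/h\Z$ be the set of residues $r$ for which $U_a\cap U^{[r]}$ is infinite. Iterating the backward $h$-invariance, any large $n^*\in U_a\cap U^{[r]}$ forces the entire downward chain $n^*,n^*-h,n^*-2h,\ldots$ into $U_a$ as long as the terms remain $\ge N_1$; letting $n^*\to\infty$ along the class yields $U^{[r]}\cap[N_1,\infty)\subset U_a$. Classes $r\notin S$ each contribute only finitely many elements, so $U_a$ equals $\bigcup_{r\in S}U^{[r]}$ modulo a finite set, which exhibits $U_a$ as a rational subset of $\Z_{\ge 0}$.

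The key rigidity step, where I expect the essential work to sit, is to prove $|S|=1$. Suppose for contradiction $r_1,r_2\in S$ with $k:=(r_2-r_1)\bmod h$ in the range $0<k<h$. Then for sufficiently large $n\in U^{[r_1]}$ both $n$ and $n+k$ lie in $U_a$, so simultaneously $X_n(P)\to a$ and $X_{n+k}(P)\to a$ as $n\to\infty$ along $U^{[r_1]}$. Applying Assertion 2a $k$ times to the second subsequence forces $X_n(P)\to\tau_\Omega^k(a)=a_k$; comparing with the first gives $a=a_k$, contradicting the minimality of the period $h$. Hence $|S|=1$, and there is a unique $r^*\in\Z/h\Z$ with $U_a=U^{[r^*]}$ modulo a finite set. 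This is the decisive place where both hypotheses---isolation of $a$ and minimality of its $\tau_\Omega$-period---do their joint work.

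To conclude, I propagate via the backward shift. For $i=0,\ldots,h-1$ and each large $n\in U^{[r^*-i]}$, one has $n+i\in U^{[r^*]}\subset U_a$, so $X_{n+i}(P)\to a$; applying Assertion 2a $i$ times backward gives $X_n(P)\to a_i$. Since $\{U^{[e]}\}_{e\in\Z/h\Z}$ partitions $\Z_{\ge 0}$, every accumulation value of $\{X_n(P)\}$ lies in $\{a_0,\ldots,a_{h-1}\}$, whence $\Omega(P)$ equals that orbit and has exactly $h$ elements. The standard partition $\mathcal{U}_h$ is then the exact rational partition, $h_P=h=\#\Omega(P)$, and the stated bijection $e\bmod h\mapsto a^{[e]}=a_{r^*-e}$ is $\tau_\Omega$-equivariant with the shift $[e]\mapsto[e-1]$ by direct inspection.
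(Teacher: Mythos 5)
Your proof is correct and follows essentially the same route as the paper: isolate $a$, show $U_a$ is stable under subtraction of $h$ beyond a threshold, conclude $U_a$ is up to a finite set a union of residue classes mod $h$, use minimality of the $\tau_\Omega$-period to cut this down to a single class, and then shift by Assertion 2a to identify $\Omega(P)$ with the orbit $\{a,\tau_\Omega a,\dots,\tau_\Omega^{h-1}a\}$ and obtain the standard partition $\mathcal{U}_{h}$. The only cosmetic difference is that you establish uniqueness of the residue class by comparing limits along a common index set, whereas the paper phrases the same fact via the translated sets $U_{(\tau_\Omega)^i a}$ having pairwise finite intersections.
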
 
\begin{proof}
% Recall the $\tau_\Omega$-action on the set $\Omega(P)$ in \S2. 
The assumption on $a$ means: 

\noindent
i) There exists a positive integer $h\in\Z_{>0}$  such that 

\centerline{\qquad $(\tau_\Omega)^ha\!=\!a\!\not=\!(\tau_\Omega)^{h'}a$ \ \ for
 \ \  $0\!<\!h'\!<\!h$. }

\noindent
ii) There exists an open neighbourhood
 $\mathcal{V}_a$ of $a$ in $\C[[s]]$ such that 
\[
 \Omega(P) \cap \mathcal{V}_a\!=\!\{a\}.
\]
\noindent
In particular, $\Omega(P)\!\setminus\!\{a\}$ is a closed set.
% As in the above definition, 

\smallskip
Since $\Omega(P)$ is a compact Hausdorff space, it is a regular
 space, so  we may assume further that $\Omega(P)\cap
 \overline{\mathcal{V}_a}\!=\!\{a\}$.\!\!
Then, by setting
$U_a\!:=\!\{n\in\Z_{\ge0}\!\mid\! X_n(P)\!\in\! \mathcal{V}_a\}$, 
 the sequence $\{X_{n}(P)\}_{n\!\in\! U_a}$ converges to the unique
 limit element $a$. 
By the definition of $\tau_\Omega$ in \S2, the relation 
$(\tau_\Omega)^ha\!=\!a$ implies that 
the sequence $\{X_{n-h}(P)\}_{n\!\in\! U_a}$ converges to $a$. 
That is, there 
exists a positive number $N$ such that for any $n\!\in\! U_a$ with $n\!>\!N$, 
$X_{n\!-\!h}(P)\in \mathcal{V}_a$, and hence $n\!-\!h$ belongs to $U_a$. 

Consider the set $A\!:=\!\{[e]\!\in\! \Z/h\Z\mid $
there are infinitely many elements of $U_a$ which are congruent to 
$[e]$ modulo $h$ $\}$. By the defining property of $N$, if $[e]\in A$, then $U_a$ contains
 $U^{[e]}\cap \Z_{\ge N}$ ({\it Proof.} For any $m\in \Z_{\ge N}$ with $m\bmod
 h\equiv [e]$, there exists an integer $m'\in U_a$ such that $m'>m$ and $m'\bmod h=[e]$ by the
 definition of the set $A$. Then, by the definition of $N$, $m'-h\in
 U_a$. Obviously, either $m'-h=m$ or $m'-h>m$ occurs. If $m'-h>m$ then
 we repeat the same argument to $m''\!:=\!m'-h$ so that $m''-h=m'-2h\in U_a$. Repeating, similar
 steps, after finite $k$-steps, we show that $m'-kh=m\in U_a$). 

Thus, $U_a$ is, up to a finite number 
of elements, equal to the rational subset $\cup_{[e]\!\in\! A} U^{[e]}$. This implies 
$A\!\not=\!\emptyset$. Consider the rational subset $U_{(\tau_\Omega)^ia}:=\{n-i\mid n\in U_{a}\}$
 for $i=0,1,\cdots,h-1$. Due to \S2.3 Assertion 2, $\{X_{n}(P)\}_{n\!\in\!
 U_{(\tau_\Omega)^ia}}$ converges to $(\tau_\Omega)^ia$, so $U_{(\tau_\Omega)^ia}$  is, up to a finite number of elements, equal to the rational subset 
$\cup_{[e]\in A} U^{[e-i]}$.  By the assumption $a\not=\tau_\Omega^ia$
 for $0\le i<h$, any pair of rational subsets $U_{(\tau_\Omega)^ia}$
 ($0\!\le\! i<h$) have at most finite intersection, so $A$ is a singleton
 of the form $A\!=\!\{[e_0]\}$ for some $e_0\in\Z$ and $U_{(\tau_\Omega)^ia}\!=\!U^{[e_0-i]}$
 up to a finite number of elements. 
On the other hand, since the union 
$\cup_{i=0}^{h-1} U_{(\tau_\Omega)^ia}$ already covers $\Z_{\ge0}$ 
up to finite elements and since each $\{X_{n}(P)\}_{n\!\in\!
 U_{(\tau_\Omega)^ia}}$ converges only to $(\tau_\Omega)^ia$,
 the opposite sequence \eqref{eq:2.2.1} can have no other accumulating point than the set 
$\{a,\tau_\Omega a,\cdots,(\tau_\Omega)^{h-1}a\}$. 
% 
% \noindent
That is,  $\Omega(P)$ is a finite rational accumulation set with the
 transitive $h_P$-periodic action of $\tau_\Omega$.
\end{proof}

\begin{coro} If the set of isolated points of $\Omega(P)$ is finite, then $\Omega(P)$ is a
 finite rational accumulation set with the presentation \eqref{eq:3.2.1}.
\end{coro}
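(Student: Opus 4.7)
The plan is to reduce the corollary to the Theorem, whose hypothesis requires a single isolated $\tau_\Omega$-periodic element of $\Omega(P)$. Let $I\subset\Omega(P)$ denote the (finite) set of isolated points. The strategy is to show that $\tau_\Omega$ carries $I$ into itself; given this, the forward orbit $\{\tau_\Omega^k(a)\}_{k\ge 0}$ of any $a\in I$ lies in the finite set $I$, so by pigeonhole some iterate $\tau_\Omega^{k_0}(a)\in I$ is $\tau_\Omega$-periodic, and the Theorem applied to this point delivers the identification \eqref{eq:3.2.1} and forces $\Omega(P)=I$ (justifying the implicit non-emptiness of $I$).

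The central task is therefore to establish $\tau_\Omega(I)\subseteq I$. Given $a\in I$, pick an open neighbourhood $\mathcal{V}_a$ with $\overline{\mathcal{V}_a}\cap\Omega(P)=\{a\}$ (by regularity of the compact Hausdorff space $\Omega(P)$), set $b:=\tau_\Omega(a)$, and put $U_a:=\{n:X_n(P)\in\mathcal{V}_a\}$; this set is infinite, and by Assertion~2.a the sequence $\{X_{n-1}(P)\}_{n\in U_a}$ converges to $b$. To show that $b$ is isolated, I would argue by contradiction: assume a sequence $c_j\in\Omega(P)\setminus\{b\}$ converges to $b$, then use surjectivity of $\tau_\Omega\colon\Omega(P)\to\Omega(P)$ together with compactness of $\Omega_1(P)$ to produce preimages $a_j'\in\Omega(P)$ of $c_j$ whose initials $\iota(a_j')$ cluster at $\iota(a)$. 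The identity $a_j'=1+\iota(a_j')\,s\,c_j$ then forces $a_j'\to a$ coefficient-wise, while $a_j'\neq a$ because $\tau_\Omega(a_j')=c_j\neq b=\tau_\Omega(a)$; this places infinitely many distinct elements of $\Omega(P)$ arbitrarily close to $a$, contradicting its isolation.

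The main obstacle is selecting the preimages $a_j'$ with the correct initials. Concretely one would take $a_j'=\lim_k X_{m_{j,k}+1}(P)$ along subsequences with $X_{m_{j,k}}(P)\to c_j$, whose initials are the ratios $\gamma_{m_{j,k}}/\gamma_{m_{j,k}+1}$ lying in the compact tameness annulus of \eqref{eq:2.1.2}. A diagonal argument combined with Assertion~2.a (which says that $X_{m+1}(P)\to a'$ forces $X_m(P)\to\tau_\Omega(a')$) should produce the desired clustering $\iota(a_j')\to\iota(a)$; any obstruction to this alignment should itself yield additional elements of $\Omega(P)$ near $a$ and contradict isolation by an alternate route. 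Once $\tau_\Omega(I)\subseteq I$ is in hand, the pigeonhole argument of the first paragraph closes the proof.
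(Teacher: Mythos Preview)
Your overall strategy coincides with the paper's one-line proof: both reduce to the claim that $\tau_\Omega$ carries isolated points to isolated points, then use finiteness of $I$ and pigeonhole to locate a $\tau_\Omega$-periodic isolated point, and finally invoke the Theorem. The paper simply asserts this invariance (``the $\tau_\Omega$ action preserves the set of isolated points'') without argument; you correctly recognize that it requires proof and attempt to supply one.

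The gap you flag, however, is genuine and your proposed fix does not close it. With $a\in I$, $b:=\tau_\Omega(a)$ assumed non-isolated, and $c_j\to b$ with $c_j\ne b$, the preimages $a_j'\in\tau_\Omega^{-1}(c_j)$ you construct need not have initials clustering at $\iota(a)$. Passing to a subsequence one gets $a_j'\to a^*$ with $\tau_\Omega(a^*)=b$ by continuity, but nothing forces $a^*=a$: the fiber $\tau_\Omega^{-1}(b)=\{\,1+\lambda s\,b:\lambda\in\C^\times\,\}\cap\Omega(P)$ is an arbitrary compact subset of an affine line and may well contain a non-isolated point $a^*\ne a$. In that scenario the $a_j'$ accumulate near $a^*$, stay outside any isolating neighbourhood $V_a$, and no contradiction with the isolation of $a$ follows. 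Your ``alternate route'' claim---that any failure of alignment would itself produce elements of $\Omega(P)$ near $a$---is precisely what is not established: the obstruction produces points near $a^*$, not near $a$. Closing the argument along these lines would require either local injectivity of $\tau_\Omega$ near $a$ (which is not available, since distinct initials $\lambda\ne\lambda'$ give distinct preimages $1+\lambda s b\ne 1+\lambda' s b$ of the same $b$) or an independent reason why $b$ cannot admit a non-isolated $\tau_\Omega$-preimage once it already admits the isolated preimage $a$. Neither is supplied.
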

\begin{proof} Since the $\tau_\Omega$ action preserves the set of isolated points of $\Omega(P)$, there should exists a periodic point.
\end{proof} 

% Due to above Corollary, the concept of rational subsets plays  again role in \S5 to define an operator $T_U$ for a rational subset $U$.

\subsection{Example by Mach\`i~\cite{M}}\hspace{0cm}

Let $\Gamma:=\Z/2\Z *\Z/3\Z\simeq \mathrm{PSL}(2,\Z)$ with the
 generator system 
$G:=\{a,b^{\pm1}\}$ where $a,b$ are the generators of $\Z/2\Z$ and $\Z/3\Z$, 
respectively.
Then, the number $\#\Gamma_n$ of elements of
 $\Gamma$ expressed by the words in the letters $G$ of length less or
 equal than $n$ for $n\in\Z_{\ge0}$ is given by 
\[
\#\Gamma_{2k}= 7\cdot2^k-6   \text{\quad and\quad  }
\#\Gamma_{2k+1} = 10\cdot2^k-6  \text{\quad  for } k\in\Z_{\ge0}.
\]
Therefore, we get the following expression of the growth function:
\[\begin{array}{cccc}
P_{\Gamma,G}(t)&\ :=\ & \sum_{k=0}^\infty \#\Gamma_{k}t^{k} & =\ \frac{(1+t)(1+2t)}{(1-2t^2)(1-t)}.
\end{array}
\] 
Then, we see that $\Omega_1(P_{\Gamma,G})$   and, hence,  $\Omega(P_{\Gamma,G})$ are finite
 rationally accumulating of period 2. Explicitly, they are given as follows.

\vspace{0.2cm}

\centerline{
\quad\quad\ $\Omega_1(P_{\Gamma,G})\! =\! \Big\{
a_1^{[0]}\!:=\!\underset{n\to
 \infty}{\lim}\!\frac{\#\Gamma_{2n-1}}{\#\Gamma_{2n}}\!=\!\frac{5}{7}, \  
a^{[1]}_1\!:=\!\underset{n\to \infty}{\lim}\!\frac{\#\Gamma_{2n}}{\#\Gamma_{2n+1}}\!=\!\frac{7}{10}\Big\}$
}

% \noindent
% and

\centerline{
 $\Omega(P_{\Gamma,G})= \Big\{\ a^{[0]}(s)\ ,\ a^{[1]}(s)\ \Big\}$\qquad \qquad \qquad \qquad \quad \
}
\noindent
 % \vspace{-0.2cm}
where 
{% \small
\[
 a^{[0]}(s): =\sum_{k=0}^\infty
 2^{-k}s^{2k}+\frac{5}{7}s\sum_{k=0}^\infty 2^{-k}s^{2k}\quad \quad \quad \quad \quad \quad 
\] }
{\large
\[
\quad
\begin{array}{lll}
& = 
 \frac{(1+\frac{5}{7}s)}{(1-\frac{s^2}{2})}
  =\frac{1}{2}\cdot \frac{1+\frac{5}{7}\sqrt{2}}{1-\frac{s}{\sqrt{2}}}+\frac{1}{2}\cdot\frac{1-\frac{5}{7}\sqrt{2}}{1+\frac{s}{\sqrt{2}}},
\end{array}
\] 
}
{% \small
\[
 a^{[1]}(s): =\sum_{k=0}^\infty
 2^{-k}s^{2k}+\frac{7}{10}s\sum_{k=0}^\infty 2^{-k}s^{2k}
 \quad\quad\quad\quad\quad \ \ 
\]}
{\large
\[
\quad
 \begin{array}{lll}
& = 
\frac{(1+\frac{7}{10}s)}{(1-\frac{s^2}{2})}
 = \frac{1}{2}\cdot\frac{1+\frac{7}{5}\frac{1}{\sqrt{2}}}{1-\frac{s}{\sqrt{2}}}+\frac{1}{2}\cdot\frac{1-\frac{7}{5}\frac{1}{\sqrt{2}}}{1+\frac{s}{\sqrt{2}}}.
\end{array}
\]
% \vspace{-0.2cm}
}

\noindent
In \S5.4, these coefficients of fractional expansions are recovered by a
use of, so
called, rational operators (see \S5.3 Theorem ii)). 

% \vspace{-0.1cm}
We calculate also
$
\begin{array}{l}
r_P^2=R_P^2=a_1^{[0]}a_1^{[1]}= \frac{5}{7}\frac{7}{10}=\frac{1}{2}.
\end{array}
$
% \vspace{-0.3cm}
\subsection{Simply accumulating Examples} \hspace{0cm}

 A tame power series $P(t)$
is called {\it simply accumulating} if $\#\Omega(P)\!=\!1$. Growth
functions $P_{\Gamma,G}(t)$ for surface groups and Artin monoids are
simply accumulating, respectively (Cannon \cite{C},\! \cite{S2,S3}).\ This
fact for Artin monoids enables one to determine their F-functions \cite{S4}. 
% \vspace{-0.2cm}

\subsection{Miscellaneous Examples}\hspace{0cm}

Before going further, we use a simple model of oscillating sequence
$\{\gamma_n\}_{n\in\Z_{\ge0}}$ to give some examples of the power series $P(t)$ such that  

a) $\Omega_1(P)$ is finite but is not finite rationally accumulating, 

b) $\Omega_1(P)$ is finite rationally accumulating but
$\#\Omega_1(P)\!<\!\#\Omega(P)$, 

c) $\Omega(P)\not=\Omega(P+Q)$ for a power series $Q(t)$ for any $R_P>r_Q>r_P$.

\noindent
We do not use these results in the sequel so that the readers may skip
present subsection without substantial loss.

\smallskip
Given a triple $\U:=(U,a,b)$, where $U\!\subset\!\Z_{\ge1}$ is any 
infinite subset with infinite complement 
 and $a,b\in \C\setminus\{0\}$, we associate a sequence 
 $\{\gamma_n\}_{n\in\Z_{\ge0}}$ defined  by an induction on $n$: 
$ \gamma_0:=1 $ and 
$ \gamma_n:=
\gamma_{n-1}\cdot a
$ if $n\!\in\! U$ and $\gamma_{n-1}\cdot b$ if $n\!\not \in\! U$.
Set $P_\U(t):=\sum_{n=0}^\infty \gamma_nt^n$.
Then:

\medskip
\noindent
{\bf Fact i)} {\it The series $P_\U(t)$ is tame and $\Omega_1(P_\U)=\{a^{-1},b^{-1}\}$.}

 {\bf ii)} {\it The series $P_\U(t)$ is finite rationally accumulating if and
 only if $U$ is 
 a rational subset of $\Z_{\ge0}$.}

\smallskip
\noindent
{\it Proof.}  i) The inequalities: $\min\{|a|,|b|\}\le |\gamma_n/\gamma_{n-1}|
 \le \max\{|a|,|b|\}$ imply the tameness of $P_\U$. The latter half is
 trivial since the proportion $\gamma_{n}/\gamma_{n-1}$ takes only
the  values $a$ or $b$.

ii) This follows from: $P_\U$ is rational $\Leftrightarrow$ The sets $\{n\!\in\!\Z_{\ge1}\mid
 \gamma_n/\gamma_{n-1}\!=\!a\}\!=\!U$  and $\{n\!\in\!\Z_{\ge1}\mid
 \gamma_n/\gamma_{n-1}\!=\!b\}\!=\!U^c$ are rational$\Leftrightarrow$ $U$
 is rational.  $\Box$

\medskip
a)  By choosing a non-rational subset $U$, we obtain an example a). 

b)  Even if $U$
(and, hence, $U^c$ also) is a rational subset, if $\{U,U^c\}$ is not the standard partition of
 $\Z_{\ge0}$ of period 2, then the period of the partition  $\{U,U^c\}\! =\!
 \#\Omega(P_\U)\!>\!2\!=\!\#\Omega_1(P_\U)$. This gives an example  b). 

\smallskip
c)  To get an example satisfying c), we need a bit more consideration.
Define 
% \[\begin{array}{ll}
%\vspace{-0.1cm}
$ 
p_U:=\underset{n\to\infty}{\overline{\lim}}\frac{\#(U\cap [1,n])}{n} 
\text{\ and \ }
q_U:=\underset{n\to\infty}{\underline{\lim}}\frac{\#(U\cap [1,n])}{n} 
$.
% \end{array}\]
If $U$ is a rational subset, then $p_U=q_U$ is a rational number. In
 general, the pair $(p_U,q_U)$ can be any of $\{(p,q)\in
 [0,1]^2\mid p\!\ge\! q\}$. 
%\vspace{-0.1cm}
Suppose $|a|\!\ge\!|b|$.
% \vspace{-0.1cm} 
\[\begin{array}{lll}
 1/r_P :=\underset{n\to\infty}{\overline{\lim}}\ |a|^{\frac{\#(U\cap [1,n])}{n}}\cdot |b|^{1-\frac{\#(U\cap [1,n])}{n}} 
= |a|^{p_U}|b|^{1-p_U},\\
 1/R_P :=\underset{n\to\infty}{\underline{\lim}}\ |a|^{\frac{\#(U\cap [1,n])}{n}}\cdot |b|^{1-\frac{\#(U\cap [1,n])}{n}} 
= |a|^{q_U}|b|^{1-q_U}.
\end{array}
% \vspace{-0.2cm}
\]
Thus, $r_P$ and $R_P$ can take any values, satisfying:
%\[
$
|a|^{-1}\!\! \le\! r_P\! \le\! R_P\! \le\! |b|^{-1}.
$
%\]
If there is a gap $r_P\!<\!R_P$, then for any  $r\!\in\!\R_{>0}$  
such that $r_P\!<\! r\! <\! R_P$,  $Q(t)\!:=\!\sum_{n=0}^\infty
e^{i\theta_n}(t/r)^n$ for 
$\theta_n$ %\!=\!\arg(\gamma_n) 
{\tiny $=\#(U\cap\Z_{1\le\cdot\le
n})\arg(a)\!+\!(n\!-\!\#(U\cap\Z_{1\le\cdot\le n}))\arg(b)$}
%. Then, $Q(t)$ 
gives example c) (since {\small $\Omega_1(P_\U)\!\cap\!\{z\!\in\!\C:
 |z|\!=\!r\}\!=\!\emptyset$}  and \S2.4 Assertion\!\! 4).

% \newpage

\section{Rational expression of opposite series}

From this section, we restrict our attention to a tame power
series having the finite rational accumulation set $\Omega(P)$.

\subsection{Rational expression}\hspace{0cm}

\label{subsec:4.1}
We show that opposite series become rational functions
of special form. 
 We start with a characterization of a finite rational accumulation.
\begin{asse} 
Let $P(t)$ be a tame power series in $t$. 
The set  $\Omega(P)$ is a finite rational accumulation set of period 
$h_P\!\in\!\Z_{\ge1}$ if and 
only if $\Omega_1(P)$ is so.\
We say $P$ is finite rationally accumulating of period $h_P$.
\end{asse}
\begin{proof} 
If $\Omega(P)$ is finite rationally accumulating, then, in particular, the
 sequence $\frac{\gamma_{n-1}}{\gamma_n}$ is finite rationally
 accumulating.  
To show the converse and to show the coincidence of the periods, assume that
$\{\gamma_{n-1}/\gamma_n\}_{n\in\Z_{\ge0}}$ 
accumulate finite rationally of period $h_1$. 
Then, for the standard subdivision $\mathcal{U}_{h_1}\!:=\!\{U^{[e]}\}_{[e]\in
 \Z/h_1\Z}$, the subsequence 
$\{\gamma_{n-1}/\gamma_n\}_{n\in U^{[e]}}$ for each $[e]\!\in\!\Z/h_1\Z$
 converges to some number, which we denote by $a_1^{[e]}\!\in\! \C$ . 

For any $k\!\in\! \Z_{\ge0}$ and sufficiently large (depending on $k$) $n$, one has 
\[
\frac{\gamma_{n-k}}{\gamma_n}\ =\
\frac{\gamma_{n-1}}{\gamma_n}\frac{\gamma_{n-2}}{\gamma_{n-1}}\cdots
\frac{\gamma_{n-k}}{\gamma_{n-k+1}}.
\]
For $n\!\in\!  U^{[e]}$ with  $[e]\!\in\! \Z/h_1\Z$, we see that the RHS converges to
$a_1^{[e]}a_1^{[e-1]}\ldots a_1^{[e-k+1]}$. 
Then, for $[e]\!\in\! \Z/h_1\Z$ and $k\in\! \Z\!_{\ge0}$, by putting
\begin{equation}
\label{eq:4.1.1}
a_k^{[e]}\ :=\ a_1^{[e]}a_1^{[e-1]}\ldots a_1^{[e-k+1]},
\end{equation}
the sequence
$\{X_n(P)\}_{n\in  U_{[e]}}$ 
converges to 
$a^{[e]}\!:=\!\sum_{k=0}^\infty a_k^{[e]}s^k$ 
with $a_1^{[e]}=\iota(a^{[e]})$ so that $\Omega(P)$ is finite rationally
 accumulating. Its period $h_P$ is a divisor of $h_1$, but it cannot be
 strictly smaller than $h_1$, since otherwise the sequence
 $\{\gamma_{n-1}/\gamma_n\}_{n\in\Z_{\ge0}}$ gets a period shorter than $h_1$.
\end{proof}
\begin{rema}
That the period of the finite rational accumulation of $\Omega_1(P)$ is
 equal to $h_P$ does not imply $\#\Omega_1(P)\!=\!h_P$. That is, the map 
 $a\!\in\!\Omega(P)\mapsto \iota(a)\!\in\!\Omega_1(P)$ is not necessarily injective (see \S3.5 Example b).
\end{rema}

\begin{asse}
Let  $P$ be finite rationally accumulating of period  
$h_P\in\Z_{\ge1}$. 
% of $\Z_{\ge0}$. 
Then the opposite series $a^{[e]}=\sum_{k=0}^\infty a_k^{[e]} s^k$ 
in $\Omega(P)$ associated with 
the rational subset $U^{[e]}$ converges to a rational function   
\begin{equation}
\label{eq:4.1.2}
a^{[e]}(s)\ =\ \frac{A^{[e]}(s)}
{1-A_Ps^{h_P}},
\end{equation}
where the numerator $A^{[e]}(s)$ is a polynomial in $s$ of degree 
$h_P\!-\!1$:  
\begin{equation}
\label{eq:4.1.3}
 \begin{array}{rll}
\qquad A^{[e]}(s)\ := \
%1+a_1^{[e]}s+a_1^{[e]}a_1^{[e-1]}s^2+\cdots +
\sum_{j=0}^{h_P-1}\left(\prod_{i=1}^{j}a_1^{[e-i+1]}\right)s^{j} 
\end{array}
% \vspace{-0.3cm}
\end{equation}
and 
% \vspace{-0.2cm}
\begin{equation}
\begin{array}{rll}
 \label{eq:4.1.4}
\quad A_P:=
\prod_{i=0}^{h_P-1}a_1^{[i]}=a_{h_P}^{[0]}%=a_{h_P}^{[1]}
=\cdots=a_{h_P}^{[h_P-1]}. \!\!\!\!\!\!\!\!\!\!\!\!
\end{array}
\end{equation}
We have a relation
\begin{equation}
\label{eq:4.1.5}
(r_P)^{h_P} \ =\ (R_P)^{h_P} \ =\ |A_P|,
\end{equation}
where $r_P$ is the radius of convergence of $P(t)$ and $R_P$ is given by
 \eqref{eq:2.1.4}.
\end{asse}
\begin{proof} Due to the $h_P$-periodicity of the sequence $a_1^{[e]}$
 ($e\in\Z$), formula \eqref{eq:4.1.1} implies the ``{\it
 semi-periodicity}''  with respect to the factor \eqref{eq:4.1.4}:
\[
\qquad\qquad  a_{mh_P+k}^{[e]}\!=\!(A_P)^ma_k^{[e]} \quad \text{ for }
 m\!\in\!\Z_{\ge0}, \  k\!=\!0,\!\cdots\!,h_P\!-\!1.\!\!\!\!\!\!\!\!\!\!\!\!
\]
This implies a factorization $a^{[e]}\!=\!A^{[e]}\cdot\sum_{m=0}^\infty\!
 (A_Ps^{h_P})^m$ and hence \eqref{eq:4.1.2}. 

\medskip
To show \eqref{eq:4.1.5}, it is sufficient to
show the existence of 
positive real constants $c_1$ and $c_2$ such that for any $k\in\Z_{\ge0}$
there exists $n(k)\in\Z_{\ge0}$ and for any integer $n\ge n(k)$, one has
$
c_1 r^k\le \big| \frac{\gamma_{n-k}}{\gamma_n}\big| \le c_2r^k
$.
 
\noindent
{\it Proof.}\!
We may choose $c_1,c_2\!\in\!\R_{\!>\!0}$ satisfying 
$c_1\!\!<\!\!\min\{\big|\frac{a_i^{[e]}}{r^i}\big| \!\mid\! [e]\!\!\in\!\Z/h\Z, i\!\in\!\Z\cap[0,h\!-\!1]\}$
and
$c_2\!>\!\max\{\big| \frac{a_i^{[e]}}{r^i}\big| \mid [e]\in\Z/h\Z, i\!\in\!\Z\cap[0,h\!-\!1]\}$.
 \qquad $\Box$

This completes a proof of Assertion 8.
\end{proof}

\begin{coro} Let $\Omega(P)$ be finite. For any power
 series $Q(t)$ of radius $r_Q$ of convergence larger
 than $r_P$, $P\!+\!Q$ is tame and $\Omega(P)\!=\!\Omega(P\!+\!Q)$.
\end{coro}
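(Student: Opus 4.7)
The plan is to reduce this Corollary to Assertion 3, whose conclusion is identical but whose hypothesis on the radius is stronger, namely $r_Q > R_P$ rather than $r_Q > r_P$. The single nontrivial observation needed is that the assumption $\#\Omega(P)<\infty$ forces the equality $r_P = R_P$, after which $r_Q > r_P = R_P$ puts us exactly in the situation of Assertion 3.

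First I would invoke the Corollary to the Theorem of \S3.2. Since $\Omega(P)$ is finite, every point is isolated, so in particular the set of isolated points is finite (and nonempty). That Corollary then asserts that $\Omega(P)$ is a finite rational accumulation set with the presentation \eqref{eq:3.2.1}, so $P$ is finite rationally accumulating of some period $h_P = \#\Omega(P)$.

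Next I would apply Assertion 8, whose conclusion includes the equality \eqref{eq:4.1.5}:
\begin{equation*}
(r_P)^{h_P} \;=\; (R_P)^{h_P} \;=\; |A_P|.
\end{equation*}
Since both $r_P$ and $R_P$ are nonnegative real numbers and $h_P \in \Z_{\ge 1}$, this yields $r_P = R_P$. Thus the hypothesis $r_Q > r_P$ of the Corollary upgrades for free to $r_Q > R_P$.

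Finally, I would invoke Assertion 3, whose hypothesis $r_Q > R_P$ is now met, to conclude that $P+Q$ is tame and $\Omega(P+Q)=\Omega(P)$. There is no real obstacle: the whole argument is the observation that finiteness of $\Omega(P)$ collapses the gap between $r_P$ and $R_P$ via \eqref{eq:4.1.5}, so that Assertion 3 can be applied verbatim.
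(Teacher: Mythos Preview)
Your proposal is correct and matches the paper's intended argument: the Corollary is placed immediately after Assertion 8 precisely because \eqref{eq:4.1.5} collapses $r_P$ and $R_P$, reducing the hypothesis $r_Q>r_P$ to that of Assertion 3. Your invocation of the \S3.2 Corollary to pass from finiteness of $\Omega(P)$ to finite rational accumulation (so that Assertion 8 applies) is exactly the bridge the paper has in mind.
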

% \begin{proof} \S2.4 Assertion.
% \end{proof}

\subsection{Coefficient matrix $M_h$ of numerator polynomials}\hspace{0cm}

\label{subsec:4.2}
In this and the next section, we study the linearly dependent relations among the opposite series $a^{[e]}(s)$ 
for $[e]\!\in\! \Z/h_P\Z$.
% are mutually distinct, they may be linearly dependent. 
% More over, the $A^{[e]}(s)$'s may have a common factor with $1-r^hs^h$.

For the purpose, let us consider the matrix
\begin{equation}
\begin{array}{l}
\label{eq:4.2.1}
M_h:=(\prod_{i=1}^fa_1^{[e-i+1]})_{e,f\in\{0,1,\cdots,h-1\}}
\end{array}
\end{equation}
of the coefficients of the numerator polynomials \eqref{eq:4.1.3}. 
Regarding $a_1^{[0]},\!\cdots\!,a_1^{[h-1]}$ as variables, let us
introduce the ``discriminant'' by
\begin{equation}
\label{eq:4.2.2}
\quad  D_h(a_1^{[0]},\cdots,a_1^{[h-1]}) := \det(M_h)
 \ \ \in\ \Z[a_1^{[0]},\cdots,a_1^{[h-1]}].
\end{equation}
Actually, $D_h$ is an irreducible homogeneous polynomial  
of degree $h(h\!-\!1)/2$.
Under the cyclic permutation
$\sigma\!=\!(0,1,\cdots,h\!-\!1)$ of the variables,
\begin{equation}
\label{eq:4.2.3}
 D_h\!\circ\!\sigma\!=\!(-\!1)^{h\!-\!1}D_h.
\end{equation}
Our next task in \S4.3 is to stratify the zero-loci of $D_h$ according to the
rank of $M_h$. This is achieved  by introducing
the {\it opposite denominator polynomial} $\Delta^{op}$, whose degree
describes the rank of the matrix
$M_h$ (see \eqref{eq:4.3.3}). Here the coefficient is an arbitrary
field $K$. In particular, for the case of $K=\R$, we give a precise 
stratification of the positive real parameter space $(\R_{>0})^h$ of the
parameter $(a_1^{[0]},\cdots,a_1^{[h-1]})$, whose strata are labeled by cyclotomic polynomials i.e.\ an integral factor
of $1-s^h$ which contains also the factor $1-s$ (see Assertion 9.iv). 

\subsection{Linear dependence relations among opposite series}\hspace{0cm}
\label{subsec:4.3}
\begin{asse}
Fix $h\!\in\!\Z_{>\!0}$. For each $[e]\in \Z/h\Z$ and each $A\in
 K^\times$, let $A^{\![e]}(s)$ be the polynomial defined in equations \eqref{eq:4.1.3} and
 \eqref{eq:4.1.4} associated with any $h$-tuple 
$\bar{a}\!=\!(a_1^{[0]},\cdots\!,a_1^{[h-1]})\!\in\! (K^\times)^h$. 

{\rm i)} In $K[s]$, we have the equality of the greatest common  divisors:
\[
 \begin{array}{llllll}
&\gcd(A^{[0]}(s), 1\!-\!As^h)&=\cdots&=&\gcd(A^{[h-1]}(s), 1\!-\!As^h)\\
=\!&\gcd(A^{[0]}(s),A^{\![1]}(s))&=\cdots&=&\gcd(A^{[h-1]}(s),A^{\![h]}(s))
\end{array}
\]
\noindent
(whose constant term is normalized to 1), which we
 denote by $\delta_{\bar{a}}(s)$. 

Let us introduce the {\it opposite denominator polynomial} by
\begin{equation}
\label{eq:4.3.1}
\Delta_{\bar{a}}^{op}(s):= (1-As^h)/\delta_{\bar{a}}(s).
\end{equation}

{\rm ii)} For $[e]\!\in\! \Z/h\Z$, put
\begin{equation}
\label{eq:4.3.2}
b^{[e]}(s):= A^{[e]}(s)/\delta_{\bar{a}}(s). 
\end{equation}
The polynomials $b^{[e]}(s)$ for $[e]\!\in\! \Z/h\Z$ 
span the space $K[s]_{<\deg(\Delta_{\bar a}^{op})}$ of 
polynomials of degree less than $\deg (\Delta_{\bar{a}}^{op})$.
Hence, one has the equality:
\begin{equation}
\begin{array}{c}
\label{eq:4.3.3}
\rank\left(M_h\right)
\ =\ \deg (\Delta_{\bar{a}}^{op}).
\end{array}
\end{equation}

{\rm iii)} For $\varphi(s)\!\in\! K[s]$, $\varphi(s)\!\mid\! \Delta_{\bar{a}}^{op}$
 if and only if $\varphi(s)\!\mid\! 1\!-\!As^h$ and $\gcd(\varphi(s),
 A^{[e]}(s))$ $\!=\!1$. In
 particular, if $\bar{a}\!\in\!(\R_{>\!0})^h$, then
 $\Delta_{\bar{a}}^{op}$ is always divisible by  $1\!-\! ^h\!\!\!\!\sqrt{A} s$.

{\rm iv)}  Let $h\!\in\!\Z_{>0}$. There
 exists a stratification 
$\R_{>0}^h\!=\!\amalg_{\Delta^{op}}C_{\Delta^{op}}$, where the index set is
 equal to 
\begin{equation}
\label{eq:4.3.4}
\{\Delta^{op}\!\in\!\R[s] :  1\!-\!s \ |\ \Delta^{op}(s)\ |\
 1\!-\!s^h \ \& \ \Delta^{op}(0)\!=\!1\},
\end{equation}
 and
 $C_{\Delta^{op}}$\! is a smooth semi-algebraic set of $\R$-dimension $\deg(\Delta^{op})\!-\!1$, such that 
$\Delta_{\bar{a}}^{op}(s)\!=\!\Delta^{op}(^h\!\!\!\sqrt{A}s)$ for $\forall\bar{a}\!\in\!
 C_{\Delta^{op}}$ and $\overline{C_{\Delta_1^{op}}}\!\supset\!
 C_{\Delta_2^{op}} \Leftrightarrow \Delta_1^{op}|\Delta_2^{op}$
\end{asse}

\begin{proof} 
i) By Definitions \eqref{eq:4.1.3}, \eqref{eq:4.1.4} and \eqref{eq:4.1.1}, 
we have the following relations:
\begin{equation}
\label{eq:4.3.5}
a_1^{[e+1]}s A^{[e]}(s)+ (1-As^h)\ =\ A^{[e+1]}(s) 
\end{equation}
for  $[e]\!\in\! \Z/h\Z$.\ 
This implies
$\gcd(A^{[e]}(s),1\!-\!As^h)\!\mid\! \gcd(A^{[e+1]}(s),1\!-\!As^h)$ for 
$[e]\in \Z/h\Z$. Thus,  one may conclude that all of the polynomials

\noindent
$\gcd(A^{[e]}(s),1-As^h)$ $=\gcd(A^{[e]}(s),A^{[e+1]}(s))$ 
for $[e]\in \Z/h\Z$ are the same up to a constant factor.
It is obvious that a factor of $1-As^h$ contains a nontrivial constant
 term, which we shall normalize to 1.

ii) Let $V$ be the subspace of  $K[s]/(\Delta_{\bar{a}}^{op})$ spanned by
 the images of $b^{[e]}(s):=A^{[e]}(s)/\delta_{\bar{a}}(s)$ for $[e]\in \Z/h\Z$.
Relation \eqref{eq:4.3.5} implies that $V$ is closed under  
multiplication by $s$. On the other hand,  $b^{[e]}(s)$ 
and $\Delta_{\bar{a}}^{op}$ are relatively prime, so they
generate 1 as a $K[s]$-module. That is, $V$ contains the class [1] of 1. 
Hence, $V=K[s]\cdot[1]=K[s]/(\Delta_{\bar{a}}^{op})$. 
Since
 $\deg(b^{[e]}(s))\!=\!h\!-\!1\!-\!\deg(\delta_{\bar{a}}(s))\!=\!\deg(\Delta_{\bar{a}}^{op})\!-\!1$,
 $V\cap K[s]\Delta_{\bar{a}}^{op}\!=\!0$.
This means that the polynomials 
$b^{[e]}(s)$ for $[e]\in \Z/h\Z$ span the space of polynomials 
of degree less than $\deg(\Delta_{\bar{a}}^{op})$. 
In particular, one has $\rank(M_h)\!=\!\rank_KV\!=\!\deg(\Delta_{\bar{a}}^{op})$. 

iii) The first half is a reformulation of the
 definition of  $\delta_{\bar a}$ and \eqref{eq:4.3.1}. We see that if $1-rs\not| \Delta_{\bar{a}}^{op})$ then  $1-rs\mid
 A^{[e]}(s)$ \eqref{eq:4.3.2} so $A^{[e]}(1/r)=0$.  This is
 impossible,  
since all coefficients of $A^{[e]}$ and $1/r$ are positive reals.

iv)  Let $\Delta^{op}$ be a polynomial as given in \eqref{eq:4.3.4} and put $d = \deg(\Delta^{op})$.
Consider the set $\overline C_{\Delta^{op}}\!:=\!\{c(s)\!=\!1\!+\!c_1s\!+\!\cdots\!+\!c_{d-1}s^d\!\in\!
 \R[s]\mid \exists r\!\in\!\R_{>0} \ \text{s.t.\ all coefficients of } 
A_c^{[0]}\!:=\!c(s)(1\!-\!r^hs^h)/\Delta^{op}(rs)
 \text{ are positive}\}
 $. Then $\overline C_{\Delta^{op}}$ is an open semi-algebraic set in
 $\R^d$, 
 which is nonempty since $\Delta^{op}(rs)/ (1\!-\!rs)$
 belongs to $\overline C_{\Delta^{op}}$. 
In particular, it is pure dimensional of real dimension $d-1$.
To any $c\!\in \! \overline C_{\Delta^{op}}$, one can associate 
a unique $\bar{a}\!\in\!\!(\R_{>0}\!)^h$ such that the associated polynomial
 $A^{[0]}$ \eqref{eq:4.1.3} is equal to $A_{c}^{[0]}$. We identify $\overline C_{\Delta^{op}}$ with 
the semi-algebraic subset $\{a\!\in\! (\R_{>0})^h\!\mid\!
 a\!\leftrightarrow\! c\!\!\in\!\! \overline C_{\Delta^{op}}\}$ of pure
 dimension $d-1$ embedded in $(\R_{>0})^h$. Similarly, for any factor $\Delta'$ of $\Delta^{op}$
 (over $\R$) divisible by $1-s$, we consider the semi-algebraic subsets
 $\overline C_{\Delta'}$ in $\R_{>0}^h$ of pure dimension $\deg(\Delta')$. 
Then, the multiplication of $\Delta^{op}/
\Delta'$ induces the inclusion $\overline C_{\Delta'}\subset
 \overline C_{\Delta^{op}}$.
Then we define the semi-algebraic set
$C_{\Delta^{op}}$ inductively by 
$\overline C_{\Delta^{op}}\!\setminus\! \cup_{\Delta'} C_{\Delta'}$, 
where the index $\Delta'$ runs over all factors of $\Delta^{op}$
which are not equal to $\Delta^{op}$ and are divisible by $1\!-\!rs$. 
By the induction hypothesis, $d\!-\!1>\dim_\R(C_{\Delta'})$ so that 
the difference $C_{\Delta^{op}}$ is a non-empty open semi-algebraic set with
pure real dimension  $d-1$.

This completes the proof of Assertion 9.
\end{proof}

Suppose $char(K)\not|\ h$, and let $\tilde K$ be the 
splitting field of $\Delta_{\bar{a}}^{op}$ with the decomposition 
$\Delta_{\bar{a}}^{op}\!=\!\prod_{i=1}^{d}(1\!-\!x_is)$ in $\tilde K$ 
for $d:=\deg(\Delta_{\bar{a}}^{op})$. 
Then, one has the partial fraction decomposition:
\begin{equation}
\label{eq:4.3.6}
\begin{array}{l}
\frac{A^{[e]}(s)}{1-As^h}\ =\ \sum_{i=1}^{d}\frac{\mu^{[e]}_{x_i}}{1-x_is}
\end{array}
\end{equation}
for $[e]\in \Z/h\Z$, where $\mu^{[e]}_{x_i}$ is a constant in $\tilde K$ 
given by the residue:
\begin{equation}
\label{eq:4.3.7}
\begin{array}{l}
\qquad
\mu^{[e]}_{x_i}\ =\ \frac{A^{[e]}(s)(1-x_is)}{1-As^h} \Bigr|_{s=(x_i)^{-1}} 
\ =\ \frac{1}{h}A^{[e]}(x_i^{-1}) .
\end{array}
\end{equation}
\begin{coro}
The matrix $\big((\mu^{[e]}_{x_i})_{[e]\in\Z/h\Z,x_i^{-1}\in
 V(\Delta_{\bar a}^{op})}\big)$ is of maximal rank $d$.
\end{coro}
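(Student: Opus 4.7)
My strategy is to reduce the claim to an application of Lagrange interpolation by passing from the matrix $M:=(\mu^{[e]}_{x_i})$ to the matrix $B:=(b^{[e]}(x_i^{-1}))$, where $b^{[e]}$ are the polynomials introduced in \eqref{eq:4.3.2}. Using the residue formula \eqref{eq:4.3.7} together with $A^{[e]}(s) = \delta_{\bar a}(s)\, b^{[e]}(s)$, I would rewrite
\[
\mu^{[e]}_{x_i} \ =\ \tfrac{1}{h}\, A^{[e]}(x_i^{-1}) \ =\ \tfrac{1}{h}\, \delta_{\bar a}(x_i^{-1})\, b^{[e]}(x_i^{-1}).
\]
Thus $M$ is obtained from $B$ by scaling the $i$th column by the nonzero factor $\delta_{\bar a}(x_i^{-1})/h$, so $\rank(M)=\rank(B)$, and it suffices to prove $\rank(B)=d$.

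The nonvanishing $\delta_{\bar a}(x_i^{-1})\neq 0$ is immediate from the factorization $1-As^h = \Delta^{op}_{\bar a}(s)\, \delta_{\bar a}(s)$ of \eqref{eq:4.3.1}: under the running hypothesis $\mathrm{char}(K)\nmid h$, the polynomial $1-As^h$ has $h$ pairwise distinct roots in $\tilde K$, so no root of $\Delta^{op}_{\bar a}$ can simultaneously be a root of the cofactor $\delta_{\bar a}$. A crucial byproduct is that the $d$ evaluation points $x_1^{-1},\ldots,x_d^{-1}$ are pairwise distinct, which is exactly the hypothesis needed for interpolation.

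For the rank computation, I would invoke Assertion 9 ii), which says the family $\{b^{[e]}:[e]\in\Z/h\Z\}$ spans $K[s]_{<d}$ as a $K$-vector space; by extension of scalars, it spans $\tilde K[s]_{<d}$ over $\tilde K$. Composing with the evaluation map $\mathrm{ev}\colon \tilde K[s]_{<d}\to \tilde K^d$, $p\mapsto (p(x_i^{-1}))_{i=1}^d$, and using that $\mathrm{ev}$ is a $\tilde K$-linear isomorphism by Lagrange interpolation at the $d$ distinct points $x_i^{-1}$, one sees that the rows $\mathrm{ev}(b^{[e]})$ of $B$ span all of $\tilde K^d$. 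Hence $B$ has row rank $d$, and so $\rank(M)=d$.

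The only potential subtlety is the passage from $K$-span to $\tilde K$-span, but this is purely formal via $\tilde K\otimes_K K[s]_{<d}= \tilde K[s]_{<d}$. I expect no genuine obstacle: the corollary comes out as a direct consequence of the spanning statement in Assertion 9 ii) combined with standard interpolation theory, with the hypothesis $\mathrm{char}(K)\nmid h$ doing double duty to ensure both simplicity of the roots of $1-As^h$ and the validity of Lagrange interpolation.
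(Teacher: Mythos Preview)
Your argument is correct and rests on the same key input as the paper's proof, namely Assertion~9~ii): the span of the $b^{[e]}$ (equivalently, of the rational functions $A^{[e]}(s)/(1-As^h)$) has dimension exactly $d$. The paper phrases the second step slightly differently: it reads \eqref{eq:4.3.6} as expressing the $h$ rational functions $A^{[e]}(s)/(1-As^h)$ in the linearly independent system $\{1/(1-x_is)\}_{i=1}^d$, so the coefficient matrix $(\mu^{[e]}_{x_i})$ must have rank equal to the dimension of the span of the left-hand sides, i.e.\ $d$. Your version makes the same point more explicitly by stripping off the nonzero column factors $\tfrac{1}{h}\delta_{\bar a}(x_i^{-1})$ and recognizing the remaining matrix as the evaluation of the spanning family $\{b^{[e]}\}$ at $d$ distinct nodes, then invoking Lagrange interpolation; this is just the polynomial side of the same linear-algebra fact (partial fractions at simple poles and evaluation at the pole locations are inverse operations up to scaling), so the two arguments are essentially equivalent.
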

\begin{proof} The rational function on the LHS of \eqref{eq:4.3.6} for $[e]\!\in\!\Z\!/\!h\Z$ span a vector
 space of rank $d\!:=\!\deg(\!\Delta_{\bar{a}}^{op})$. Therefore, the coefficient
 matrix on the RHS has rank equal to $d$.
\end{proof}

\begin{rema}
1.  One has the equivariance 
$\sigma(\mu^{[e]}_{x_i})\!=\!\mu^{[e]}_{\sigma(x_i)}$ 
with respect to the action $\sigma\in {\rm Gal}(\tilde
 K,K)$ of the Galois group of the splitting field.

2. The index $x_i$ in \eqref{eq:4.3.7} may run over all roots $x$ of 
the equation $x^h\!-\!A\!=\!0$. However, if $x^{-1}\not\in
V(\Delta_{\bar{a}}^{op})$ 
(i.e.\ $\Delta_{\bar{a}}^{op}(x^{-1})\!\not=\!0$), then $\mu^{[e]}_{x}\!=\!0$.

3. For the given $h\in\Z_{>0}$, to consider the space of finite
 parameters $(a_1^{[0]},\cdots,a_1^{[h-1]})$ is equivalent to consider the
 space of infinite parameters $(a_i)_{i\in\Z}$ with ``quasi''-periodicity $a_{i+h}=Aa_i$. Then
it was suggested by the referee to regard the latter space over $\C$ as a
 $h$-''quasi''-periodic 
 representation of $\Z$ and to decompose it to the direct sum the
 sequence $(a_i=A^{i/h\chi(i))})$ for $\chi\in 
Z/h\Z\to \C^\times$.
\end{rema}

\subsection{The module $\C\Omega(P)$}\hspace{0cm} 

We return to a tame power series $P(t)$ \eqref{eq:2.1.1}.
% of a positive radius $r\!>\!0$ of convergence.
% Let us explain now a relation of the singularity of $P(t)$ with the opposite series. 
Suppose  $P(t)$ is finite rationally accumulating  
of a period $h_P$. Let $a_1^{[e]}$ be the initial of the opposite 
series $a^{[e]}\!\in\! \Omega(P)$ for $[e]\!\in\! \Z/h_P\Z$. 
Since $\Delta_{\bar{a}}^{op}(s)$ \eqref{eq:4.3.1} for
$\bar{a}\!:=\!(a_1^{[0]},\cdots,a_1^{[h-1]})$ depends only on $P$ 
but not on the choice of a period $h_P$, we shall denote it  
by $\Delta_P^{op}(s)$ and call it the {\it opposite denominator polynomial} of
$P$.
% Let us denote by $\C\Omega(P)$ the $\C$-vector subspace of $\C[][s]]$ spanned by $\Omega(P)$. 
Then, \S4.3 Assertion 9.ii) says that we have the $\C$-isomorphism:
\begin{equation}
\begin{array}{lll}
\label{eq:4.4.1}
\C\Omega(P)& \simeq& \C[s]/(\Delta_P^{op}(s)), \\
\quad a^{[e]}& \mapsto& b^{[e]}:=\Delta_P^{op}\cdot a^{[e]}\bmod\Delta_P^{op}.
\end{array}
\end{equation} 
Let us rewrite equality \eqref{eq:4.3.2} and introduce the key number:
\begin{equation}
d_P:=\rank_\C\big(\C\Omega(P)\big) = \deg(\Delta_P^{op}).
\end{equation}

Define an endomorphism $\sigma$ on $\C\Omega(P)$ by letting 
\begin{equation}
\begin{array}{l}
\sigma(a^{[e]})\!:=\!\tau_\Omega^{-1}(a^{[e]})\!=\!\frac{1}{a_1^{[e+1]}}a^{[e+1]}.
\end{array}
\end{equation}
\begin{asse} The actions of $\sigma$ on the LHS  and the 
multiplication of $s$ on the RHS of \eqref{eq:4.4.1} 
are naturally identified. Hence, the linear dependence relations 
among the generators $a^{[e]}$ ($[e]\!\in\!\Z/h\Z$) are 
obtained by the linear dependence relations
 $\Delta_{P}^{op}(\sigma)a^{[e]}$  for $[e]\!\in\!\Z/h\Z$.
\end{asse}
\begin{proof} The first part of Assertion 10 is a matter of calculation.

% \vspace{-0.1cm}
% $\sum_{[e]\!\in\!\Z/h\Z} c_{[e]}a^{[e]}=0$
 $\sum_{[e]\!\in\!\Z/h\Z} c_{[e]}b^{[e]}\equiv
 0 \bmod \Delta_{P}^{op}(\sigma)b^{[e]}\!=\!0$  for $[e]\!\in\!\Z/h\Z$.
% One should be careful that 
\end{proof} 

Note that the $\sigma$-action on $\C\Omega(P)$ is not $s|_{\C\Omega(P)}$ in the ring $\C[[s]]$.

% \newpage
\section{Duality theorem }

In this section, we restrict the class of functions $P(t)$ to those that
are analytically
continuable to a meromorphic function in a neighbourhood of the closed
disc of convergence.\footnote
{This assumption is necessary, since {\it the finite rational accumulation of
$P(t)$ does not imply that $P(t)$ is meromorphic on the boundary of its
convergent disc.} \\
 {\it Example.} 
Consider the function
$P(t):= \sqrt{\frac{1+t}{1-t}}=\sum_{n=0}^\infty \frac{(n-1)!}{2^n[n/2]![(n-1)/2]!}t^n$ which is
tame. We see that the sequence of the proportion
$\gamma_{n-1}/\gamma_n$ of its coefficients accumulates to the unique values 1, i.e.\
$\Omega_1(P)=\{1\}$ and $\Omega(P)=\{1/(1-s)\}$. On the other hand, we
observe that the function $P(t)$ has two singular points on the boundary
of the unit disc $D(0,1)$ which are not meromorphic but algebraic. Such
algebraic branching cases shall be treated in a forthcoming paper.
} 
Under this assumption, we show a duality
between $\Omega(P)$ and poles of $P(t)$ on the boundary of the disc.
% Assuming that $P(t)$ extends to a meromorphic function in a 
%% neighborhood of the closure of its convergent disc,  
% we show a duality between the 
% poles of opposite sequences of $P(t)$ 
% and the poles of $P(t)$ on its convergent circle.

% Finally, in this subsection, we introduce some more concepts and notation.
%  in order to determine the polar singular part of $P(t)$. 

% \newpage
\subsection{Functions of class $\C\{t\}_r$}\hspace{0cm}

% We introduce a class  $\C\{t\}_r$ of power series, and fix some notations for the functions of class $\C\{t\}_r$.
For $r\in\R_{>0}$, we introduce  a class 
\begin{equation}
\label{eq:5.1.1}
\C\{t\}_r\!:=\!\Bigg\{
P(t)\!\in\!\C[[t]] \Big| 
\begin{subarray}{l}\text{
 i) $P(t)$ converges on the open disc $D(0,r)$.} \\
\text{ ii) $P(t)$ is analytically continuable to a meromorphic} \\
\text{\qquad function on an  open neighbourhood of $\overline{D(0,r)}$.
}
\end{subarray}
\!\Bigg\}\!\!\!\!\!\!\!\!\!\!\!
\end{equation}

For an element $P(t)$ of $\C\{t\}_r$, let us introduce a monic
polynomial $\Delta_P(t)$, called the {\it polar part polynomial} of
$P(t)$, characterized by 

i) \ $\Delta_P(t)P(t)$ is holomorphic in a neighbourhood of the circle
$|t|=r$,

ii) $\Delta_P(t)$ has lowest degree  among all polynomials satisfying i).

\medskip
\noindent
Next, we decompose
\begin{equation}
\label{eq:5.1.2}
\begin{array}{l}
\Delta_P(t)=\prod_{i=1}^N(t-x_i)^{d_i}
\end{array}
\end{equation} where 
 $x_i$ ($i\!=\!1\!,\!\cdots\!,\!N$, $N\!\in\!\Z_{\ge 0}$) 
are mutually distinct complex numbers 
with $|x_i|\!=\!r$ and $d_i\!\in\!\Z_{>0}$ ($i\!=\!1,\!\cdots\!,N$). 

\begin{defn} The {\it top denominator polynomial} $\Delta_P^{top}(t)$ of $P(t)$ is 
% St $\Delta_P^{r}\!:=\!\prod_{i=1}^N(t-x_i)$ and
\begin{equation}
\begin{array}{l}
\label{eq:5.1.3}
\Delta_P^{top}(t)\!:=\!\prod_{i,d_i=d_{m}}\!\!(t-x_i) \quad\text{ where}\quad  
d_{m}\!:=\!\max\{d_i\}_{ i\!=\!1}^N. \!\!\!\!\!\!\!\!
\end{array}   
\end{equation}
\end{defn}

Note that $\Delta_P(t)$ may be equal to 1, and then
$\Delta_P^{top}(t)=1$. The converse: {\it if $\Delta_P(t)\not=1$, then
$\Delta_P^{top}(t)\not=1$}, is also true.

\subsection{The rational operator $T_U$}\hspace{0cm}

Associated with a
rational subset $U$ of $\Z_{\ge0}$, we introduce a linear operator $T_U$ acting on $\C\{t\}_r$ to itself, which we call a {\it rational
operator} or a {\it rational action} of $U$.
\begin{defn}
The action $T_U$ on $\C[[t]]$ of 
a rational subset $U$ of $\Z_{\ge0}$ is  
\begin{equation}
\label{eq:5.2.1}
\begin{array}{l}
T_U\ : \ P=\sum_{n\in \Z_{\ge0}}\gamma_nt^n
\quad \mapsto\quad   T_UP:=\sum_{n\in U}\gamma_nt^n. 
\end{array}
\end{equation}
One may regard $T_UP$ as a product of $P$ with the  
rational function $U(t)$ (\S3.1 Definition) in the sense of Hadamard \cite{H}. 
\end{defn}

The action $T_U$ is continuous w.r.t. the adic topology on
 $\C[[t]]$ since $T_U \big(t^k\C[[t]]\big)\subset t^k\C[[t]]$ for any
 $k\!\in\Z_{\ge0}$.  It is also clear that the radius of convergence of $T_UP$ is not less than that of
 $P$.

\begin{asse}
For $h\in \Z_{\ge0}$ and $[e]\in\Z/h\Z$, let us define the rational operator 
$T^{[e]}:= T_{U^{[e]}}$.
Then, we have 
 \vspace{-0.2cm}
{\footnotesize
 \begin{eqnarray}
\label{eq:5.2.2}
\sum_{e=0}^{h-1} T^{[e]} \ =\ 1, \qquad\qquad\qquad\qquad\qquad\\
\label{eq:5.2.3}
\qquad \qquad T^{[e]}\cdot t\ =\ t\cdot T^{[e-1]} \quad  \& \quad
T^{[e]}\cdot \frac{d}{dt}\ =\ \frac{d}{dt}\cdot T^{[e+1]}.
 \end{eqnarray}
}
\end{asse}
\begin{proof} 
The equation \eqref{eq:5.2.2} is a consequence of  $\Z_{\ge0}=\sqcup_{e=0}^{h-1}U^{[e]}$.
The \eqref{eq:5.2.3}: for any $t^m$
 ($m\in\Z_{\ge0}$), both sides return the same
 {\small $t^{m\!+\!1}\delta_{[e],[m\!+\!1]}\!=\!t^{m\!+\!1}\delta_{[e\!-\!1],[m]}$
 and
 $mt^{m\!-\!1}\delta_{[e],[m\!-\!1]}\!=\!mt^{m\!-\!1}\delta_{[e\!+\!1],[m]}$},
 respectively. 
\end{proof}

\begin{coro}
The action $T_U$ 
for a rational subset $U\subset\Z_{\ge0}$ 
preserves $\C\{t\}_r$ for any $r\!\in\!\R_{>0}$. The
highest order of poles on $|t|\!=\!r$ of $T_UP$ does not exceed that of
 $P\!\in\! \C\{t\}_r$.
\end{coro}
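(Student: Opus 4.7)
The plan is to reduce the Corollary to the case of standard rational operators $T^{[e]}$ and then represent each such operator as a finite linear combination of rotations of $P$ via a discrete Fourier transform.

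First, using condition iv) of the Fact in \S3.1, there exist $h\in\Z_{>0}$, a subset $u\subset\Z/h\Z$ and a finite set $D\subset\Z_{\ge0}$ such that $U\setminus D=\bigsqcup_{[e]\in u}(U^{[e]}\setminus D)$. Hence $T_UP=\sum_{[e]\in u}T^{[e]}P+R(t)$, where $R(t)\in\C[t]$ is a polynomial correction coming from the finite symmetric difference on $D$. Since $R(t)$ is entire, it lies in $\C\{t\}_r$ and contributes no poles on $|t|=r$; thus it suffices to prove both assertions for each standard operator $T^{[e]}$.

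The key step is the discrete Fourier identity
\[
T^{[e]}P(t)\ =\ \frac{1}{h}\sum_{j=0}^{h-1}\zeta^{-je}\,P(\zeta^j t),\qquad \zeta:=e^{2\pi i/h},
\]
which follows from the orthogonality relation $\frac{1}{h}\sum_{j=0}^{h-1}\zeta^{j(n-e)}=\delta_{[n],[e]}$ applied term-by-term to $P(t)=\sum_n\gamma_n t^n$. Since $P\in\C\{t\}_r$ is meromorphic on some open neighbourhood $V$ of $\overline{D(0,r)}$, each rotated function $P(\zeta^j t)$ is meromorphic on the rotated neighbourhood $\zeta^{-j}V$, which is again an open neighbourhood of $\overline{D(0,r)}$ (the closed disc being rotation-invariant). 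On $\bigcap_{j=0}^{h-1}\zeta^{-j}V\supset\overline{D(0,r)}$ all the $P(\zeta^j t)$ are simultaneously meromorphic, so their finite linear combination $T^{[e]}P(t)$ is meromorphic there, establishing $T^{[e]}P\in\C\{t\}_r$.

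For the pole-order bound, each pole $x_i$ of $P$ on $|t|=r$ of order $d_i\le d_m$ gives rise to a pole of $P(\zeta^j t)$ at $t=\zeta^{-j}x_i$ of the same order $d_i$. Hence every summand in the Fourier formula has pole order at most $d_m$ on $|t|=r$, and a finite linear combination of meromorphic functions cannot exceed the maximum pole order of its summands at any single point. Summing over $[e]\in u$ preserves the bound, and the polynomial $R(t)$ contributes no poles, so $T_UP$ has pole order at most $d_m$ on $|t|=r$. The only genuine insight required is the Fourier identity in the second step; once that formula is recorded, both conclusions follow by elementary pointwise reasoning, and no substantive obstacle remains.
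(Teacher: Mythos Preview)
Your proof is correct and follows essentially the same strategy as the paper: reduce to the standard operators $T^{[e]}$ via the decomposition of $U$, then express $T^{[e]}$ as a finite average of rotations $P(\zeta^j t)$ over $h$th roots of unity. The only cosmetic difference is that the paper first reduces $T^{[e]}$ to $T^{[0]}$ using the commutation relation $T^{[e]}=t^{\,e-h}T^{[0]}t^{\,h-e}$ from \eqref{eq:5.2.3} and then invokes the averaging formula $T^{[0]}P(t)=\frac{1}{h}\sum_{\zeta}P(\zeta t)$, whereas you write down the full discrete Fourier identity $T^{[e]}P(t)=\frac{1}{h}\sum_{j=0}^{h-1}\zeta^{-je}P(\zeta^{j}t)$ directly; both arrive at the same averaging argument, and your version is arguably slightly cleaner.
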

\begin{proof}
By decomposing the subset $U$ as in \S3.1 {\bf Fact} iv), we
need to prove this only for the case $U=U^{[e]}$ for some $[e\in\Z/h\Z]$
with $0\le e<h$. Since \eqref{eq:5.2.3} implies
$T^{[e]}\!=\!t^{e-h}T^{[0]}t^{h-e}$, we have only to prove the case when
$U=U^{[0]}=h\Z$. But, then, $T_{U^{[0]}}$, which maps $P(t)$ to
$\frac{1}{h}\sum_{\zeta}P(\zeta t)$, has the required property.
\end{proof}

\medskip
\subsection{Duality theorem}\label{subsec:5.3}\hspace{0cm}

The following is the goal of the present paper.

\bigskip
\begin{theo}\!\!\!\!\!{\bf (Duality)} \  
Let $P(t)$ be a tame power series belonging to $\C\{t\}_r$ for $r\!=\!r_P\ (=$
 the radius of convergence of $P${\rm )}. Suppose that $P(t)$   is finite
 rationally accumulating of period $h_P$. Then

% \vspace{0.1cm}
\noindent
{\rm i)} The opposite denominator polynomial $\Delta_P^{op}(s)$ \eqref{eq:4.3.1}
 and
 the top denominator polynomial $\Delta_P^{top}(t)$ \eqref{eq:5.1.3} of $P(t)$ are
 opposite to each other. That is,
 \begin{equation}
\label{eq:5.3.1}
\deg_t(\Delta_P^{top}(t))\ =\ d_P \ =\ \deg_s(\Delta_P^{op}(s)),
\vspace{-0.2cm}
\end{equation}
and 
\vspace{-0.2cm}
\begin{equation}
\label{eq:5.3.2}
t^{d_P}\Delta_P^{op}(t^{-1})
 = \Delta_P^{top}(t), \text{\   equivalently \quad }
s^{d_P}\Delta_P^{top}(s^{-1}) = \Delta_P^{op}(s). 
 \end{equation}

\noindent
{\rm ii)}  We have an equality of transition matrices:
\begin{equation}
\label{eq:5.3.3}
\begin{array}{lll}
\!\Big( \frac{P(t)}{T^{[e]}P(t)}\big|_{t=x_i}\Big)_{[e]\in
 \Z/h_P\Z,\ x_i\in V(\Delta_P^{top}(t))} 
\!\!=
\Big( %\mu^{[e]}_{x_i}=\frac{1}{h_P}
A^{[e]}\big|_{s\!=\!x_i^{-1}}\Big)_{[e]\in
\Z/h_P\Z,\ x_i^{-1}\in V(\Delta_P^{op}(s))} .
\end{array}
\end{equation}
In particular, $\Big( \frac{P(t)}{T^{[e]}P(t)}\big|_{t=x_i}\Big)_{[e]\in
 \Z/h_P\Z,\ x_i\in V(\Delta_P^{top}(t))}$ is of maximal rank $d_P$.
\end{theo}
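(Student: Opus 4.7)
My strategy is to use the meromorphic structure of $P$ to derive an asymptotic expansion of $\gamma_n$, feed this into the definition of the opposite series to obtain an explicit partial-fraction form for each $a^{[e]}(s)$, and then read off both the duality of denominators and the matching of transition matrices by comparing residues on the two sides.

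Subtracting the principal parts of $P$ at its poles on $|t|\!=\!r$ reduces $P$ to a function holomorphic in a strictly larger disc. Writing $c_i:=c_{i,d_m}$ for the top-order residues and $\tilde c_i:=(-1)^{d_m}c_i/x_i^{d_m}\neq 0$, this yields the asymptotic
\[
\gamma_n\ =\ \frac{n^{d_m-1}}{(d_m-1)!}\sum_{i:\,d_i=d_m}\tilde c_i\,x_i^{-n}\ +\ O\!\bigl(n^{d_m-2}r^{-n}\bigr).
\]
Set $x_i=r\zeta_i$ with $|\zeta_i|=1$ and $g_0(n):=\sum_{i:\,d_i=d_m}\tilde c_i\zeta_i^{-n}$, so that $\gamma_n\sim(n^{d_m-1}/(d_m-1)!)\,r^{-n}g_0(n)$. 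The hypothesis that $\gamma_{n-k}/\gamma_n$ converges along $n\in U^{[e]}$ for every $k$, combined with linear independence of the distinct characters $n\mapsto\zeta_i^{-n}$, forces $\zeta_i^{h_P}\!=\!1$ for every top-order $x_i$; a telescoping product of $a_1^{[e]}=r\,g^{[e-1]}/g^{[e]}$ with $g^{[e]}\!:=\!g_0(e)$ then identifies $A_P=r^{h_P}$ and $x_i^{h_P}=A_P$. Tameness rules out $g^{[e_0]}\!=\!0$: choosing $e_0$ smallest such, one has $g^{[e_0-1]}\neq 0$, so the leading asymptotic on $U^{[e_0]}$ collapses to $O(n^{d_m-2}r^{-n})$ while $|\gamma_{n-1}|\gtrsim n^{d_m-1}r^{-n+1}$, making $|\gamma_{n-1}/\gamma_n|$ grow linearly in $n$ and contradicting \eqref{eq:2.1.2}.

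Passing to the limit in the asymptotic along $n\in U^{[e]}$ gives $a_k^{[e]}=r^k\,g^{[e-k]}/g^{[e]}$; summing the resulting geometric series yields the partial-fraction form
\[
a^{[e]}(s)\ =\ \frac{1}{g^{[e]}}\sum_{i:\,d_i=d_m}\frac{\tilde c_i\,\zeta_i^{-e}}{1-x_is}.
\]
Each residue is nonzero, so this is already in lowest terms, and the common denominator of the family $\{a^{[e]}\}_{[e]\in\Z/h_P\Z}$ equals $\prod_{i:\,d_i=d_m}(1-x_is)=s^N\Delta_P^{top}(s^{-1})$, where $N=\deg\Delta_P^{top}$. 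Comparing with the reduced form $a^{[e]}=b^{[e]}/\Delta_P^{op}$ from \eqref{eq:4.4.1} yields $d_P=N$ and $\Delta_P^{op}(s)=s^{d_P}\Delta_P^{top}(s^{-1})$, which is part (i).

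For part (ii) I would use the character-sum identity $T^{[e]}P(t)=h_P^{-1}\sum_{\omega^{h_P}=1}\omega^{-e}P(\omega t)$. The only summands contributing a pole of order $d_m$ at $t=x_i$ are those with $\omega=x_j/x_i$ for some top-order $x_j$ (each such $\omega$ is an $h_P$-th root of unity by the previous step); expanding $P(\omega t)$ near $t=x_i$ gives
\[
\left.\frac{P(t)}{T^{[e]}P(t)}\right|_{t=x_i}\ =\ \frac{h_P\,c_i\,x_i^{-e-d_m}}{\sum_{j:\,d_j=d_m}c_j\,x_j^{-e-d_m}}.
\]
On the opposite side, applying L'H\^{o}pital to $A^{[e]}(s)=(1-A_Ps^{h_P})a^{[e]}(s)$ at $s=x_i^{-1}$ (using $x_i^{h_P}=A_P$) yields $A^{[e]}(x_i^{-1})=h_P\tilde c_i\zeta_i^{-e}/g^{[e]}$; substituting $\tilde c_i=(-1)^{d_m}c_i/x_i^{d_m}$ and $\zeta_i^{-e}=r^ex_i^{-e}$, the common factors $(-1)^{d_m}r^e/(d_m-1)!$ cancel between numerator and denominator to produce the same value. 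Maximality of rank follows from the Corollary to Assertion 9. The step I expect to be the main obstacle is the simultaneous control of leading and next-to-leading terms in the asymptotic of $\gamma_n$ needed to force $x_i^{h_P}=A_P$ for \emph{every} top-order pole and to exclude $g^{[e]}=0$, since it requires combining the character-independence argument with careful use of the tameness inequality \eqref{eq:2.1.2} across all residue classes modulo $h_P$.
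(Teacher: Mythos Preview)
Your approach is correct and takes a genuinely different route from the paper's. The paper proceeds in stages: it first isolates the simply accumulating case and shows $\Delta_P^{top}=t-1$ there by a mean-value and Vandermonde argument (Assertion~13), then reduces the general case to it by observing that each $T^{[f]}P$, viewed as a series in $\tau=t^{h_P}$, is simply accumulating (Assertion~14), and finally establishes the two divisibilities $\Delta_P^{top}\mid t^{d_P}\Delta_P^{op}(t^{-1})$ and its converse by separate arguments (Assertions~15 and~16). You instead go directly: extract the asymptotic of $\gamma_n$ from the partial-fraction expansion of $P$, feed it into the definition of $a^{[e]}$, obtain the explicit partial-fraction form $a^{[e]}(s)=(g^{[e]})^{-1}\sum_i\tilde c_i\zeta_i^{-e}/(1-x_is)$, and read off the reduced denominator $\prod_{d_i=d_m}(1-x_is)$ at once. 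This is more economical and makes the duality visible as a single identification rather than a pair of divisibilities; the paper's route, by contrast, isolates a reusable lemma and keeps the rational-operator machinery in the foreground, which then also drives its proof of \eqref{eq:5.3.3} via the ratio-of-coefficients Fact and \eqref{eq:5.3.5}, whereas you compute both sides of \eqref{eq:5.3.3} by a direct residue calculation using the character-sum formula for $T^{[e]}$.

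Both approaches rest on the same underlying fact: a finite sum $\sum_i b_i\eta_i^{\,n}$ with distinct $\eta_i$ on the unit circle can tend to zero only if every $b_i=0$. The paper makes this explicit via a Vandermonde estimate inside Assertion~13. Your phrase ``linear independence of the distinct characters'' is really this lemma; the cleanest way to deploy it in your framework is to note first that $\gamma_{n-h_P}-A_P\gamma_n=o(|\gamma_n|)=o(n^{d_m-1}r^{-n})$ along every residue class (since $a_{h_P}^{[e]}=A_P$ and $g_0$ is bounded), whence $\sum_i\tilde c_i(r^{h_P}\zeta_i^{h_P}-A_P)\zeta_i^{-n}\to 0$ and the lemma forces $x_i^{h_P}=A_P$. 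With that in hand your periodicity of $g_0$, the formula $a_1^{[e]}=r\,g^{[e-1]}/g^{[e]}$, and the tameness contradiction excluding $g^{[e]}=0$ all follow as you wrote, dissolving the obstacle you flag at the end.
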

\begin{proof}
We start with the following obvious remark.

\begin{asse} Let $c\in\C^\times$ be any non-zero complex
 constant. Change the variable $t$ to $\tilde t:=t/c$ and the opposite
 variable $s$ to $\tilde s:=cs$, and, for any tame series $P$, define a
 new tame series $\tilde P:=P|_{t=c\tilde t}$. 

Then we have, 

\smallskip
\centerline{
$\Omega(\tilde P)\ \ =\ \Omega(P)|_{s=\tilde s/c } \ :=\ \{a(\tilde s/c)\mid
 a(t)\in\Omega(P)\}$ , 
}
\centerline{
$\Omega_1(\tilde P)\ \ =\ \ \Omega_1(P)/c\ :=\ \{a_1/c\mid
 a_1\in\Omega_1(P)\}$.\quad\ \ 
}
\end{asse}
\begin{proof} The equalities follows immediately from direct calculations.
\end{proof}

According to Assertion 12, we prove the theorem by changing the
 variable $t$ to $\tilde t=t/c$ for $c=\ ^{h_P}\!\!\!\!\sqrt{A_P}$ (recall
 \eqref{eq:4.1.4}) so that the new tame series has the constant
 $A_{\tilde P}$ equal to 1. Therefore, from now on, in the present proof,
we shall assume that $P$  is a
 finite rationally accumulating tame series with $A_P\!=\!1$.
In particular, this implies that the radius $r_P$ of
 convergence of $P$ is equal to 1 (recall \eqref{eq:4.1.5}).

\medskip
\noindent
\ \ We first prove the theorem for a special but the key case when $\#\Omega(P)\!=\!1$.\!

\begin{asse} If $P(t)$ is simply accumulating
% (i.e.\ $\#\Omega(P)\!=\!1$),
then $\Delta_P^{top}\!=\!t\!-\!1$.
\end{asse}
{\it Proof.} Consider the partial fractional expansion of $P$:
\begin{equation}
\begin{array}{lll}
\label{eq:5.3.4}
P(t)=\sum_{i=1}^N\sum_{j=1}^{d_i} 
\frac{c_{i,j}}{(t-x_i)^j} + Q(t),
\end{array}
\end{equation}
where $x_i$ ($i=1,\cdot,N$) is the location of a pole of $P$ of order
 $d_i$ on the unit circle $|x_i|=1$, $c_{i,j}$ ($j=1,\cdots,d_i$) is a
 constant in $\C$, and $Q(t)$ is a holomorphic function on a disc of
 radius $>1$. 

We apply stability (Assertion 3 in \S2.5) to the partial
 fractional expansion \eqref{eq:5.3.4}, to obtain
 $\Omega(P)\!=\!\Omega(P\!-\!Q)$. That is, the principal part
 $P_0\!:=\!P\!-\!Q$ gives rise to a simply
 accumulating power series. That is, $X_n(P_0)\!=\!\sum_{k=0}^n
 \!\frac{\sum_{i=1}^N\sum_{1\le j\le d_m}c_{i,j}x_i^{k-n-1}(n-k;j)/(j-1)!}
{\sum_{i=1}^N\sum_{1\le j\le d_m}c_{i,j}x_i^{-n-1}(n;j)/(j-1)!} s^k$
 ($n\!=\!0,1,2,\cdots$) converges to 
$\frac{1}{1-s}\!=\!\sum_{k=0}^\infty s^k$. 
Then, under this assumption, we'll show that if $c_{i,d_{m}}\!\not=\!0$ then $x_i\!=\!1$. 

% For a convenience of the proof, we may assume $r\!=\!1$ and hence
 % $|x_i|\!=\!1$ for all $i$.  

For each fixed $k\in\Z_{\ge0}$, the numerator and denominator of the
 coefficient of $s^k$ in $X_n(P_0)$ are polynomials in 
$n$ of degree $\le d_m$. 
Let $v_n\!:=\!\sum_{i\!=\!1}^Nc_{i,d_{m}}x_i^{-n-1}$ be 
the coefficients of the top-degree term $n^{d_m}/(d_m-1)!$ in the denominator. Since the range of
$v_n$ is bounded (i.e.\ $|v_n|\le\sum_i |c_{i,d_m}|$ due to the
 assumption $|x_i|=1$), the sequence for $n\!=\!0,1,2,\cdots$
 accumulates to a non-empty compact set in $\C$. 
 
First,  consider the case when the sequence $\{v_n\}_{n\in\Z_{ge0}}$ has a unique accumulating value $v_0$. 
Let us show that $v_0$ is non-zero and the result of Assertion 13 is true. ({\it Proof.} The mean sequence:
$\{(\sum_{n\!=\!0}^{M\!-\!1}v_n)/M\}_{M\!\in\!\Z_{\!>\!0}}$ 
also converges to 
$v_0\!=\!\underset{n\to\infty}\lim v_n$. This means that  
$\sum_{i\!=\!1}^N c_{i,d_{m}}\frac{\sum_{n\!=\!0}^{M-1}x_i^{-n-1}}{M}$ 
converges to $v_0$. If $x_i\!\not=\!1$, the mean sum
 $\frac{\sum_{n\!=\!0}^{M-1}x_i^{-n-1}}{M}\!=\!\frac{1-x_i^{-M}}{(x_i-1)M}$
 tends to $0$ as $M\!\to\!\infty $. That is, $v_0\!=\!c_{1,d_m}$, where we assume
 $x_1\!=\!1$  (even if, possibly $c_{1,d_m}\!=\!0$).
That is, the sequence $v_n'\!:=\!v_n-c_{1,d_m}\!=\!\sum_{i\!=\!2}^Nc_{i,d_m}x_i^{-n-1}$ converges to 0. 
For a fixed $n_0\!\in\! \Z_{>0}$, consider the relations:
$v'_{n_0\!+\!k}\!=\!\sum_{i\!=\!2}^N (c_{i,d_m}x_i^{-n_0}) x_i^{-k+1}$ for $k\!=\!1,\cdots,N\!-\!1$.
Regarding $c_{i,d_m}x_i^{-n_0}$ ($i\!=\!2,\!\cdots\!,N$) as the unknown,
 we can solve the linear equation for them, since the Vandermonde determinant for the 
matrix $(x_i^{-k+1})_{i\!=\!2,\!\cdots\!,N, k\!=\!1,\!\cdots\!,N\!-\!1}$
 does not vanish.  
So, we obtain a linear approximation: $|c_{i,d_m}|=|c_{i,d_m}x_i^{-n_0}|\le c\cdot \max\{|v'_{n_0\!+\!k}|\}_{k\!=\!1}^{N\!-\!1}$ ($i\!=\!2,\!\cdots\!,N$) 
for a constant $c>0$ which depends only on $x_i's$ and $N$ but not on $n_0$. 
The RHS tend to zero as $n_0\!\to\! \infty$, whereas the LHS are unchanged.
This implies $|c_{i,d_m}|\!=\!0$, i.e.\ $d_i\!<\!d_m$ for $i\!=\!2,\!\cdots\!,N$. As we have
 already remarked $\Delta_P(t)\not=1$ implies
 $\Delta_P^{top}(t):=\prod_{\substack d_i=d_m}(t-x_i)\!\not=\!1$,
 and hence $c_{1,d_m}$ cannot be 0. So $\Delta_P^{top}(t)=t-1$.

Next, consider the case when the sequence $v_n$ has more than two 
accumulating values. Then, one of them is non-zero. Suppose the subsequence $\{v_{n_m}\}_{m\in \Z_{>0}}$ 
converges to a non-zero value, say $c$. 
Recall the assumption that the sequence $\gamma_{n-1}/\gamma_n$ converges 
to 1. So, the subsequence 
$\frac{\gamma_{n_m-1}}{\gamma_{n_m}}=\frac{v_{n_m-1}+ \text{lower terms}}
{v_{n_m}+ \text{lower terms}}$ 
should also converge to 1 as $m\to \infty$. 
In the denominator, the first term tends to $c\!\not=\!0$ 
and the second term (= (a polynomial in $n$ of degree
 $d_m\!-\!1$)$/n^{d_m}$) tends to zero. Similarly, in the numerator, the second term 
tends to zero. This implies that the first term in the numerator also converges to $c\not=0$. Repeating the same argument, we see that for any $k\in\Z_{\ge0}$,
 the subsequence $\{v_{n_m-k}\}_{m\in\Z_{>\!\!>0}}$ converges to the same $c$.
Then, for each fixed $M\in \Z_{>0}$, 
the average sequence $\{(\sum_{k=0}^{M-1}v_{n_m-k})/M\}_{m\in \Z_{>\!\!>0}}$ 
converges to $c$, whereas the
 values is given by
 $\sum_{i=2}^Nc_{i,d_m}x_i^{-n_m}\frac{1-x_i^{-M}}{(1-x_i^{-1})M}+c_{1,d_m}$
 which is close to $c_{1,d_m}$ for sufficiently large $M$ and $n_m>\!\!>M$. This implies $c=c_{1,d_m}$. 
Thus, the sequences 
$\{v'_{n_m-k}=\sum_{i=2}^Nc_{i,d_m}x_i^{n_m-k}\}_{m\in\Z_{>\!\!>0}}$ for any $k\ge0$ converge to 0.
Then, an argument similar to that of the previous case implies 
$|c_{i,d_m}|\!=\!0$, i.e.\ $d_i<d_m$ ($i\!=\!2,\!\cdots\!,N$). Hence, we have  $\Delta_P^{top}(t)=t-1$.

The proof of Assertion 13 is complete. \qquad\qquad\qquad\qquad  $\Box$

\medskip
We return to the proof of the  general case, where
$P$ is finite rationally accumulating of period $h$, but may no longer be
 simply accumulating.
% which may not necessary be minimal, and 

\begin{asse} Let $P\in\C\{t\}_1$ be finite rational accumulating and the
 top denominator polynomial of $P$ is defined as in \eqref{eq:5.1.3}. Then, 

i) The top denominator polynomial of $P$ is a factor of
 $t^h-1$. 

ii) For any $0\!\le\! f\!<\!h$, $T^{[f]}P$ as a power series in
 $\tau:=t^h$ is simply accumulating, where top order of its denominator is
 equal to $d_m$.
\end{asse}
\begin{proof}
Since $P$ is rationally finite accumulating of period $h$ with radius 
of convergence $r_P\!=\!1$,
 we have $\lim_{m\to\infty} \gamma_{f+(m-1)h}/\gamma_{f+mh}=1(=r_P^h)$ for any {\small $0\le\! f\!<\!h$}. 
Regarding   
$T^{[f]}P=t^f\!\sum_{m=0}^\infty\! \gamma_{f+mh}\tau^m$ 
as a power series in $\tau\!=\!t^h$ and $t^f$ as a
 constant factor of the series, this implies that
 $\Omega_1(T^{[f]}P)=\{1\}$ and, hence, that $T^{[f]}P$ is simply accumulating.   
Then, Assertion 13 implies that the highest order poles of 
$T^{[f]}P$ (in the variable $\tau$) is only at $\tau\!-\!1\!=\!0$ for
 all $[f]\in\Z/h\Z$, and Corollary to Assertion 11 implies that the order of the pole
 at $\tau=1$ is less or equal than $d_m:=$the highest order of poles of 
 $P(t)$. Thus, we get an expression 
$T^{[f]}P=t^f\frac{g^{[f]}(\tau)}{(\tau-1)^{d_m}}$, where
 $g^{[f]}\in\C\{\tau\}_1$ such that orders of poles of $g^{[f]}$ is
 strictly less than $d_m$.
In view of \eqref{eq:5.3.4}, we obtain

$*$) \centerline{
$P = \sum_{f=0}^{h-1}T^{[f]}P= \frac{\sum_{f=0}^{h-1}t^fg^{[f]}(\tau)}{(\tau-1)^{d_m}} $. \quad
}

\noindent
This means, in particular, the the location of poles of $P$ of top order
 $d_f$ is contained in the solutions of $t^h-1=0$, i.e.\
 $\Delta^{top}(t)|(t^h\!-\!1$) and i) is proven. To show the the latter
 half of ii), we need to show that $g^{[f]}(1)\not=0$ for all
 $f$. However, $*$) says that 
$g^{[f_0]}(1)\not=0$ for some $f_0$. 

Assuming $g^{[f]}(1)=0$ for some $f$, we show a contradiction. Consider the
 sequence $\{\gamma_{f+mh}/\gamma_{f_0+mh}\}_{m\in\Z_{\ge0}}$. On one
 side, this converges to a non-zero number since $P$
 is finite rational accumulating of order $h$. On the other hand, since
 $g^{[f_0]}(\tau)/(\tau\!-\!1)^{d_m}$ has pole of order $d_m$ only at
 $\tau\!=\!1$, we have
 $\gamma_{f_0+mh}\!=\!g^{[f_0]}(1)m^{d_m}\!+\!O(m^{d_m-1})$ and order of poles of $g^{[f]}(\tau)/(\tau\!-\!1)^{d_m}$ are strictly less
 than $d_m$ by assumption, 
we have $\gamma_{f+mh}\!=\!O(m^{d_m-1})$. Thus the sequence converges to
 0, which contradicts to the non-zero limit! 
\end{proof}

For $0\!\le\! e,f\!<\!h$, let us calculate 
the value of the proportion $\frac{T^{[f]}P}{T^{[e]}P}(t)$ at a root $x$
 of the equation $t^h\!-\!1$ (defined by cancelling the poles at the
 point as a meromorphic function). 
% It is the limit of the proportion of the values of functions
%  at the sequence of points in the variable $t$ (resp.\ $\tau$) converging to $x$ (resp.\ $x^h\!=\!1$) from inside the convergence disc $|t|\!<\!1$ (resp.\ $|\tau|\!<\!1$).  Thus, we obtain:
\[
\!\!\!\!\!\!\!\!\!\!\!\!*)\qquad\qquad\qquad\quad
\frac{T^{[f]}P}{T^{[e]}P}(t)\biggr|_{t=x}\ =\ x^{f-e}\ \frac{g^{[f]}\big|_{\tau=1}}
{g^{[e]}\big|_{\tau=1}} .\qquad
\]
In order to calculate this value, we prepare an elementary Fact. 

\medskip
\noindent
{\bf Fact.} {\it Let $A(\tau)\!=\!\sum_{m=0}^\infty
 a_m\tau^m,B(\tau)\!=\!\sum_{m=0}^\infty b_m\tau^m\in\C\{\tau\}_1$ such
 that their highest order poles of the same order $d$ exist only at $\tau=1$.  Then,

\noindent
$**)$
\centerline{
\large{
 $\frac{A(\tau)}{B(\tau)}\big|_{\tau\!=\!1}=\underset{m\to\infty}{\lim}\ \frac{a_m}{b_m}$.\quad}
}\vspace{-0.4cm}
}
\begin{proof} Replacing $t$ and $c_{ij}$ in (5.3.4) with $\tau$ and
 $a_{ij}$ or $b_{ij}$, respectively, the RHS of $**)$ is written as
$\underset{m\to\infty}{\lim}\!\! \frac{\sum_{i=1}^N\sum_{ j\le d}a_{i,j}x_i^{-m-1}(m;j)/(j-1)!}
{\sum_{i=1}^N\sum_{ j\le d}b_{i,j}x_i^{-m-1}(m;j)/(j-1)!}$, where $x_i$
 is a complex number with $|x_i|\!=\!1$ and $x_1\!=\!1$. Since {\small
 $a_{1,d}\!=\!(\tau-1)^dA(\tau)\big|_{\tau\!=\!1}$} and {\small $b_{1,d}\!=\!(\tau-1)^dB(\tau)\big|_{\tau\!=\!1}$} are non-zero but $a_{i,d}=b_{i,d}=0$ for $i\not=1$, this is
 equal to $\underset{m\to\infty}{\lim}\!\! \frac{a_{1,d}(m;d)/(d-1)!+O(m^{d-1})}
 {b_{1,d}(m;d)/(d-1)!+O(m^{d-1})}=\frac{a_{1,d}}{b_{1,d}}=\frac{A(\tau)}{B(\tau)}\big|_{\tau\!=\!1}$. 
\end{proof}

\noindent
Applying this Fact, the RHS of  $*)$  is equal to 
$ x^{f-e}\! \lim\limits_{m\to\infty}\! \frac{\gamma_{f+mh}}{\gamma_{e+mh}}$.\!
Then, applying to this expression a similar argument for
 \eqref{eq:4.1.1}, we obtain:
{\small
\begin{equation}
\label{eq:5.3.5}
\frac{T^{[f]}P}{T^{[e]}P}(t)\biggr|_{t=x}
\ =\ 
\begin{cases} 
x^{f-e}/a_1^{[f]}a_1^{[f-1]}\cdots a_1^{[e+1]}  &\text{if $e< f$}\\
\qquad 1&\text{if $e= f$}\\
x^{f-e} a_1^{[e]} a_1^{[e-1]} \cdots a_1^{[f+1]}  &\text{if $e> f$}. 
\end{cases}
\end{equation}
}
Since the RHS are non-zero in all cases, the order of the poles of $T^{[e]}P(t)$ at a solution $x$
of the equation $t^h-1$ is independent of $[e]\in \Z/h\Z$. 
Summing up both sides of \eqref{eq:5.3.5} for $0\!\le f\!<\!h$, we obtain
{\small
\begin{equation}
\label{eq:5.3.6} 
% \begin{array}{l}
{\small
\frac{P}{T^{[e]}P}(t)\biggr|_{t=x}
=\ A^{[e]}(x^{-1}).
% \sum_{f=0}^{h-1}x^{f-e}a_1^{[e]}a_1^{[e-1]}\cdots a_1^{[f+1]} }
}
% \end{array}
\end{equation}
}
(recall the $A^{[e]}(s)$ \eqref{eq:4.1.3}).
Let $x$ be a solution of $t^h\!-\!r^h\!=\!0$ but 
$\Delta_P^{op}(x^{-1})\!\not=\!0$. Then 
$\delta_a(x^{-1})\!=\!0$ (see \eqref{eq:4.3.1}) and $A^{[e]}(x^{-1})\!=\!0$
for all $[e]\!\in\! \Z/h\Z$ (see Assertion 9. i)). 
That is, $\frac{T^{[e]}P}{P}(t)$ has a pole at $t\!=\!x$. 
This implies that $P(t)$ cannot have a pole of order $d_m$ at $t\!=\!x$
 (otherwise, due to Corollary to Assertion 11, the pole at $t\!=\!x$ of $T^{[e]}P$ is
 at most of order $d_m$, which is cancelled in  $\frac{T^{[e]}P}{P}(t)$
 by dividing by $P$, yielding a contradiction!). 
That is, we get one division relation.

\begin{asse}
$\Delta_P^{top}(t)\mid t^{d_P}\Delta_P^{op}(t^{-1})$ and
 $\deg(\Delta^{top}_P)\le d_P$.
\end{asse}

Finally, let us show the opposite division relation.

\begin{asse} Let $P(t)$ be a tame power series belonging to
 $\C\{t\}_r$, which is finite rationally accumulating of period $h$. Then

i) There exists a constant $c\!\in\!\R_{>0}$ such that $|\gamma_n|\!\ge\! cr^{-n}n^{d_m}$ 
for $n\!>\!\!>\!0$.

ii) 
\centerline{
$t^d \Delta_P^{op}(t^{-1}) \ | \ \Delta_P^{top}(t)$\ .\qquad\qquad
}
\end{asse}
{\it Proof.} 
i)  Consider the Taylor expansion of the partial fractional expansion {eq:5.3.4}. 
Using notation $v_n$ in Assertion 13, 
we have $\gamma_n=-v_n\frac{ r^{-n-1}(n;d_m)}{(d_m-1)!}$
$+\! \text{(terms coming from 
poles of order $<\!d_m$)}\! +\! \text{(terms coming from $Q(t)$)}$, 
where $v_n=\sum_{i}c_{i,d_m}(x_i/r)^{-n-1}$ depends only on $n \bmod h$ since 
$x_i$ is the root of the equation $t^h-r^h=0$. They cannot all be zero 
(otherwise, by solving the equations $v_n\!=\!0$ ($0\!\le\! n\!<\!h$), we
 get $c_{i,d_m}\!=\!0$ for all $i$, which contradicts to the vanishing of $d_m$). 
Let us show that none of the $v_n$ is zero. Suppose 
the contrary and $v_e\!=\!0\!\not=\!v_f$ for some integers $0\!\le\!
 e,f\!<\!h$. Then, one observes easily that 
$\underset{m\to\infty}{\lim}\frac{\gamma_{e+mh}}{\gamma_{f+mh}}=0$. This contradicts 
to formula \eqref{eq:5.3.5} and the non-vanishing of $a^{[e]}_1$ ($[e]\!\in\!\Z/h\Z$).

ii) Since $\Delta_P^{top}$ cancels all poles of maximal order, the fractional expansion of $\Delta_P^{top}(t)P(t)$ has 
poles of order at most $d_m\!-\!1$. Set
 $\Delta_P^{top}(t)\!=\!t^l\!+\!\al_1t^{l-1}\!+\!\cdots\!+\!\al_l$. Then,
 this means that the sequence $\{\gamma_N\}$ (Taylor coefficients of $P$) satisfies
\[
\gamma_{N}\cdot\al_l+\gamma_{N-1}\cdot\al_{l-1}+\cdots+\gamma_{N-l}\cdot1\ \ \sim\ \ o(N^{d_m}r^{-N})
\leqno{***)}
\]
as $N\! \to\! \infty$. 
Let $\sum_ka_ks^k\!\in\!\Omega(P)$ be an opposite series
given by a sequence $\{X_{n_m}(P)\}_{m\in\Z_\ge0}$ \eqref{eq:2.2.1}.
For each fixed $k\!\in\!\Z_{\ge l}$, substitute $N$ by $n_m\!-\!k\!+\!l$ in $***)$ and divide it by $\gamma_{n_m}$.
Then, taking the limit $m\!\to\infty$ using the part i), the RHS converges to 0, so that we get  
\[
a_{k-l}\al_l+a_{k-l+1}\al_{l-1}+\cdots+a_{k} \ =\ 0.
\]
Thus $s^l\Delta_P^{top}(s^{-1}) a(s)$ is a polynomial %in $s$ 
of degree 
$<\!l$ and  the denominator $\Delta_P^{op}(s)$ of $a(s)$ divides $s^l\Delta_P^{top}(s^{-1})$.
So, $d_P\!\le\! l$ and ii) is proved. 

This completes a proof of Assertion 16. \qquad \qquad  $\Box$

\smallskip
The proof of the theorem:\! \eqref{eq:5.3.1} and
 \eqref{eq:5.3.2} are already shown by Assertions 15 and 16, and
 \eqref{eq:5.3.3} is shown by \eqref{eq:4.3.7} and  \eqref{eq:5.3.6}.
\end{proof}

\subsection{Example by Mach\`i (continued)}\hspace{0cm}

Recall \S3.3 Mach\`i's example, where we learned that the
growth function $P_{\Gamma,G}(t)=\sum_{n=0}^\infty \#\Gamma_nt^n$  for the modular group
$\Gamma=\mathrm{PSL}(2,\Z)$ with respect to certain generator system $G$ is
equal to $\frac{(1+t)(1+2t)}{(1-2t^2)(1-t)}$ and that it is finite
rationally accumulating of period $h=2$. 

Using this data, we calculate further the rational actions on it.
\smallskip
% \[
% \begin{array}{clll}
% T_eP(t)&\ =\ & \sum_{k=0}^\infty \#\Gamma_{2k}t^{2k} & =\ \frac{1+5t^2}{(1-2t^2)(1-t^2)}, \\
% T_oP(t)&\ =\ & \sum_{k=0}^\infty \#\Gamma_{2k+1}t^{2k+1} & =\ \frac{2t(2+t^2)}{(1-2t^2)(1-t^2)},\\
% % P_{\Gamma,G}(t)&\ :=\ & \sum_{k=0}^\infty \#\Gamma_{k}t^{k} & =\ \frac{(1+t)(1+2t)}{(1-2t^2)(1-t)}.
% \end{array}
% \] 
\[
% {\footnotesize
\begin{array}{l}
T^{[0]}P_{\Gamma,G}(t)=\ \ \ \sum_{k=0}^\infty \#\Gamma_{2k}t^{2k}\ \ \ =
 \frac{1+5t^2}{(1-2t^2)(1-t^2)}, \\
\\
T^{[1]}P_{\Gamma,G}(t)= \sum_{k=0}^\infty \#\Gamma_{2k+1}t^{2k+1} = \frac{2t(2+t^2)}{(1-2t^2)(1-t^2)},
\end{array}
% }
\] 
The opposite denominator polynomial of the series $a^{[e]}$
($[e]\in\Z/2\Z$) and the top denominator polynomial of $P_{\Gamma,G}(t)$ are given as follows.
\[
 \Delta_{P_{\Gamma,G}}^{op}(s)=1-\frac{1}{2}s^2  \qquad \& \qquad \Delta_{P_{\Gamma,G}}^{top}(t)=t^2-\frac{1}{2}.
\]
% \vspace{0.1cm}
Then the transformation matrix is given by
\[
\begin{array}{lll}
\!\!\!\!
\left[\!\!
\begin{array}{cc}
\frac{P_{\Gamma,G}(t)}{T^{[0]}P(t)}\!\!=\!\!
\frac{(1+t)^2(1+2t)}{1+5t^2}\!\mid_{t=\frac{1}{\sqrt{2}}}\!&\!
\frac{P_{\Gamma,G}(t)}{T^{[1]}P(t)}\!\!=\!\!\frac{(1+t)^2(1+2t)}{2t(2+t^2)}\!\mid_{t=\frac{1}{\sqrt{2}}}\!\\
\frac{P_{\Gamma,G}(t)}{T^{[0]}P(t)}\!\!=\!\!
\frac{(1+t)^2(1+2t)}{1+5t^2}\!\mid_{t=\frac{-1}{\sqrt{2}}} \!&\! 
\frac{P_{\Gamma,G}(t)}{T^{[1]}P(t)}\!\!=\!\!
\frac{(1+t)^2(1+2t)}{2t(2+t^2)}\!\mid_{t=\frac{-1}{\sqrt{2}}}\!
\end{array}\!\!
\right] 
\!=\!
% \\
% & \\
% =&
{\footnotesize
\left[\!
\begin{array}{cc}
\!\!\footnotesize{1\!+\!
\frac{5}{7}\sqrt{2}} \!&\!
 \footnotesize{1\!+\!\frac{7}{5}\frac{1}{\sqrt{2}}}\! \\
\\
\!\!\footnotesize{1\!-\!\frac{5}{7}\sqrt{2}}\! &\! \footnotesize{1\!-\!\frac{7}{5}\frac{1}{\sqrt{2}}}\!
\end{array}\!
\right]
}\!.
\end{array}
\]
In fact, {\it this matrix coincides with the matrix}
$2\cdot\big(\mu^{[e]}_{x_i}\big)_{[e]\in\Z/2\Z,
x_i\in\{\pm\sqrt{2}^{-1}\}}$ \eqref{eq:4.3.7},
which was already calculated in \S3.3 Example as the coefficient of fractional expansion
of the opposite series $a^{[0]}$ and $a^{[1]}$. In particular, its
determinant, equal to $\frac{\sqrt{2}}{35}$, is non-zero. The matrix is an
essential ingredient of the trace formula for limit F-functions \cite[(11.5.6)]{S1}

\bigskip

\noindent
{\it Acknowledgement}: The author is grateful to Scott Carnahan
for his careful reading of the manuscript and suggesting corrections. He
is also grateful to the referee for the simplifications of the proofs
of Assertion 2.b. and Corollary to Assertion 11. 

\bigskip
\noindent
{\it Note of remembrance}:  The present paper was written shortly
after the earthquake and tsunami struck Tohoku aria of Japan on March
11, 2011. The author would
like to express his deep sorrow for the people who passed away in the
disaster.

\end{document}